\documentclass[11pt]{article}
\usepackage{mymacros}

\usepackage[capitalize]{cleveref}
\usepackage{cite}

\usepackage{catchfilebetweentags}
\makeatletter
\def\CatchFBT@Fin@l#1[#2]{%
   \begingroup
      \makeatletter #2%
      \scantokens\expandafter{%
         \expandafter\CatchFBT@tok\expandafter{\the\CatchFBT@tok}}%
      \CatchFBT@IsAToken{#1}
         {\global#1\expandafter{\the\CatchFBT@tok}}
         {\xdef#1{\the\CatchFBT@tok}}%
      \ifx\CatchFBT@tok#1\else\global\CatchFBT@tok{}\fi
   \endgroup
}
\makeatother

\newcommand{\PP}{\bm{P}}

\newcommand{\Eg}{\mathsf{E}}

\newcommand{\ShG}{{\rm ShG}}

\author{
  Omar Abdelghani\footnote{Courant Institute of Mathematical Sciences, New York University. E-mail: {\tt oa2391@nyu.edu}.}
  \and Roland Bauerschmidt\footnote{Courant Institute of Mathematical Sciences, New York University. E-mail: {\tt bauerschmidt@cims.nyu.edu}.}
  \footnote{Current address: Institut f\"ur Angewandte Mathematik, Universit\"at Bonn. E-mail: {\tt bauerschmidt@uni-bonn.de}.}
  \and Thierry Bodineau\footnote{I.H.E.S., Universit\'e Paris-Saclay, CNRS, Laboratoire Alexandre Grothendieck. 
 E-mail: {\tt bodineau@ihes.fr}.}
  \and Benoit Dagallier\footnote{Department of Mathematics, Imperial College London. E-mail: {\tt b.dagallier@imperial.ac.uk}.}
}
\date{September 23, 2026}

\usepackage[titles]{tocloft}
\title{Log-Sobolev inequality for the sinh-Gordon model}

\begin{document}
\maketitle
\begin{abstract}
    For the sinh-Gordon model (without external mass term),
    we show that the log-Sobolev inequality holds uniformly in the lattice and volume regularisations
    for all parameters in the regime in which the Gaussian multiplicative chaos has second moments.
    As a consequence, this establishes that infinite-volume (massless) sinh-Gordon measures on $\R^2$ exist.

    The proof uses the Polchinski equation method for log-Sobolev inequalities.
    The required estimates on the renormalised potential are established using simple perturbation theoretic bounds,
    valid for absolutely monotone potentials, and an extension of the correlation inequality of Ding--Song--Sun to potentials in the GHS class.
    This inequality is proved also by using a variant of the Polchinski equation through the maximum principle.
\end{abstract}

\begin{center}
  \emph{Dedicated to Chuck Newman on the occasion of his 80th birthday.}
\end{center}

\setcounter{tocdepth}{1}
\tableofcontents

\section{Introduction}

This paper has two parts which can be read independently.
The first part (Section~\ref{sec:LSI}) concerns 
the log-Sobolev inequality for the sinh-Gordon model,
a model in two-dimensional quantum field theory.
The second part (Section~\ref{sec:DSS}) extends the Ding--Song--Sun (DSS) inequality to all potentials in the Griffiths--Hurst--Sherman (GHS)
class\footnote{Our proof of the DSS inequality was previously posted as a preprint \cite{2609.08980} which will not be published separately.}
(including in particular the $\cosh$ potential of the sinh-Gordon model).
This inequality is a (black box) ingredient in the proof of the log-Sobolev inequality.

Both parts involve variants of the Polchinski renormalisation group equation
(see \cite{MR4798104} for an introduction), used in different ways.
In the proof of the log-Sobolev inequality, the Polchinski equation is used to provide a multiscale analogue of the Bakry--\'Emery criterion \cite{MR4303014,MR4798104}.
In the proof of the DSS inequality,
the parabolic maximum principle is applied to a variant of the Polchinski equation for the doubled system.

The sinh-Gordon model is a surprisingly interesting example in two-dimensional quantum field theory;
see \cite{MR4258290,MR4430201} for a recent physics reviews, 
and also the introduction of the very recent article by two of us with Hofstetter and Zeitouni \cite{2609.17461} where a quite complete picture for the hierarchical version of the model in the entire
subcritical phase $b^2 \in (0,1)$ is provided.
The construction of the Euclidean model on $\R^2$, implied for $b^2 \in (0,1/2)$ by the results of Section~\ref{sec:LSI} of this paper, had been open.

Rigorous progress was made on the form factor bootstrap program for the sinh-Gordon 
quantum field theory recently \cite{MR4680395,MR4607722},
but the natural construction of the sinh-Gordon model as a probability measure on $\cS'(\R^2)$ is not implied by these results.
Recent probabilistic results  concern the model on a cylinder \cite{MR5055747},
the existence of the free energy of the model \cite{2408.16574},
and tightness of a variant of the hierarchical version of the model \cite{2408.16649}.
In the very recent work of two of us with Hofstetter and Zeitouni \cite{2609.17461} mentioned above,
we obtained a very complete picture for the hierarchical model in the entire subcritical
phase $b^2 \in (0,1)$, including mass gap and log-Sobolev inequality.

Our generalised Ding--Song--Sun inequality of Section~\ref{sec:DSS} implies that,
for all bare potentials in the GHS class \cite{MR395659},
including in particular the $\cosh$ potential of the sinh-Gordon model,
the renormalised potential
defined with Pauli--Villars decomposition is least convex at zero field; for an introduction see again \cite{MR4798104}.
The original DSS inequality is specific to Ising models and
its proof relies on Ising spins taking only two values~\cite{MR4586225}.
By approximation, the DSS inequality extends to models approximable by Ising models such as $\varphi^4$ models.
However, the sinh-Gordon model with arbitrary mass term cannot be approximated by Ising models \cite{MR449366}.

This estimate extends the conclusion of log-Sobolev inequalities
up to the critical point---defined abstractly by bounded susceptibility---which was established for Ising
and (lattice and continuum) $\varphi^4$ models in \cite{MR4705299,MR4720217},
to all lattice spin models in the GHS class.
Our proof of the log-Sobolev inequality for the sinh-Gordon model in this paper is however \emph{different} from this extension,
in that we prove the boundedness of the log-Sobolev inequality 
without assumption on the susceptibility. As a consequence we, in fact, prove the boundedness of the susceptibility.
Moreover, we expect that the log-Sobolev inequality is main ingredient to show a mass gap using the Holley--Stroock uniqueness method~\cite{2504.08606}.

\section{Log-Sobolev inequality for sinh-Gordon model}
\label{sec:LSI}

For $b>0$ and $\mu>0$, the regularised sinh-Gordon measure on a discrete torus $\Lambda_\epsilon = \Lambda_{\epsilon,L} \subset \R^2$ of spacing $\epsilon>0$ and arbitrary side length $L>0$
(assumed to be a multiple of $\epsilon$) is defined by
\begin{equation} 
\label{e:shG-measure}
\nu(d\varphi) \propto \exp\qa{\frac{1}{4\pi} (\varphi,\Delta_\epsilon\varphi) - \frac{\mu_\epsilon}{4b^2} \int_{\Lambda_\epsilon} \cosh(2b \varphi(x)) \, dx} \, d\varphi,
\qquad \mu_\epsilon = \mu \epsilon^{2b^2},
\end{equation}
where the discrete Laplacian $\Delta_\epsilon$ is given by 
\begin{equation}
(\Delta_\epsilon\varphi)(x) = \epsilon^{-2}\sum_{y\sim x}[\varphi(y)-\varphi(x)]
,
\end{equation}
where 
$\int_{\Lambda_\epsilon}\cdot \,dx$ is a shorthand for $\epsilon^d\sum_{x\in\Lambda_{\epsilon}}\cdot$ and the inner product $(\cdot,\cdot)$ is the one of $L^2(\Lambda_{\epsilon})$:
\begin{equation}
(\varphi,\psi)
=
\epsilon^d \sum_{x\in\Lambda_\epsilon}\varphi(x)\psi(x)
=
\int_{\Lambda_\epsilon}\varphi(x)\psi(x)\, dx
.
\end{equation}
The Gaussian free field has no mass term and is normalised to have covariance $\sim \log |x|$ (interpreted suitably).
In this convention, the full $L^1$ regime is $b^2<1$ and the $L^2$ regime is $b^2<1/2$.

\begin{theorem} \label{thm:lsi}
For any $b^2\in (0,1/2)$ and $\mu>0$ the sinh-Gordon measure \eqref{e:shG-measure}
has a uniform log-Sobolev constant $\gamma= \gamma(b,\mu)$ satisfying $1/\gamma \lesssim 1+\mu^{-1/(1+b^2)}$:
for any $\epsilon>0$ and $L \geq 1$,
and for any smooth $F: \R^{\Lambda_{\epsilon}} \to \R_+$,
\begin{equation}
\ent_{\nu}(F) \leq \frac{2}{\gamma} \E_\nu\qa{ \|\nabla^\epsilon\sqrt{F}\|_{L^2(\Lambda_\epsilon)}},
\end{equation}
where $\nabla^\epsilon=(\epsilon^{-d}\partial_{\varphi(x)})_{x\in\Lambda_{\epsilon}}$ is the $L^2(\Lambda_\epsilon)$ gradient.
\end{theorem}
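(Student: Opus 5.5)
We will use the Polchinski equation criterion for the log-Sobolev inequality \cite{MR4303014,MR4798104}. Since the Gaussian free field here has no mass, we first split off a mass scale. Choose a Pauli--Villars type decomposition of the massless covariance $(-\Delta_\epsilon/(2\pi))^{-1}$ restricted to nonconstant modes,
\begin{equation*}
C = \int_0^\infty \dot C_t\,dt, \qquad \dot C_t = \frac{1}{2\pi}\frac{1}{(t(-\Delta_\epsilon)+1)^2}\ \text{(up to normalisation)},
\end{equation*}
so that $C_t \to C$ and the scale $t$ corresponds to length $\sqrt t$. The zero mode is handled together with the potential: the cosh interaction confines it, since $\cosh$ is convex.

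Define the renormalised potential $V_t = -\log \E_{C_t}[e^{-V_0(\cdot+\zeta)}]$, where $V_0(\varphi) = \frac{\mu_\epsilon}{4b^2}\int\cosh(2b\varphi)$. The multiscale Bakry--\'Emery criterion gives $1/\gamma \le \int_0^\infty e^{-2\lambda_t}\,dt$ (plus the final-scale contribution), with $\dot C_t^{1/2}\He V_t\dot C_t^{1/2} \ge \dot\lambda_t \dot C_t - \tfrac12\partial_t \dot C_t$ suitably. It therefore suffices to show:
\begin{enumerate}
\item[(i)] $\He V_t \ge -o(1)$ in the sense $\int_0^t \dot\lambda_s\,ds \ge -K$ uniformly in $\epsilon,L$ for $t \le t_0$;
\item[(ii)] for $t \ge t_0 \asymp \mu^{-1/(1+b^2)}$, $\He V_t \ge c/t$, which produces the mass gap scale.
\end{enumerate}
The scale $t_0$ is where the renormalised coupling $\mu_\epsilon \E[e^{2bX_t}]$ with $\mathrm{Var}(X_t) \sim \frac{1}{2\pi}\cdot 2\pi\log(\sqrt t/\epsilon)$ satisfies $\mu t^{b^2}\cdot t \sim 1$, i.e.\ $t_0 = \mu^{-1/(1+b^2)}$. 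This gives the stated bound $1/\gamma \lesssim 1 + \mu^{-1/(1+b^2)}$.

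The key structural input is the generalised DSS inequality of Section~\ref{sec:DSS}. Since $\cosh$ is in the GHS class, it gives $\He V_t(\varphi) \ge \He V_t(0)$ for all $\varphi$, so lower bounds on the Hessian reduce to zero field. At zero field, $\He V_t(0) = \He V_0\text{-average} - \mathrm{Cov}$. Explicitly,
\begin{equation*}
\He V_t(0) = \E_{\nu_t}[\He V_0] - \mathrm{Var}_{\nu_t}(\nabla V_0),
\end{equation*}
where $\nu_t$ is the fluctuation measure. We would bound this perturbatively, using that $\cosh$ is absolutely monotone in the sense that all even moments are positive. The first term is $\mu_\epsilon \E\cosh(2b\zeta)\,\delta \approx \mu_{\rm ren}(t)$. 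The covariance term is controlled by the two-point function of the Gaussian multiplicative chaos $e^{2b\zeta}$, whose integral $\int \mu_\epsilon^2 \E[\sinh\sinh]\,dy \lesssim \mu_{\rm ren}^2 \int |x-y|^{-4b^2}$ on scales $\le \sqrt t$ is finite exactly when $b^2<1/2$: this is the $L^2$ regime. Comparison with the Gaussian measure (dropping the positive potential via GHS/Gaussian domination) lets us replace $\nu_t$ moments by Gaussian ones.

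The main obstacle is (ii) together with uniformity in $L$ at large scales. Below $t_0$ the covariance term is $O(\mu_{\rm ren}^2 t)$, which is small relative to $\mu_{\rm ren}$, and this gives (i). Beyond $t_0$ perturbation theory breaks down, and we instead need convexity of order $1/t$. For this step we would first try the following. Use the zero-mode convexity from DSS: lower-bound $\He V_t(0)$ on the constant direction by the inverse susceptibility of the block variable, which is bounded via the Brascamp--Lieb inequality applied to the strictly convex cosh potential at scale $t_0$ ($\cosh'' \ge 1$ gives an effective mass $\mu_{\rm ren}(t_0) \sim 1/t_0$). We would then propagate this mass through the Polchinski flow, using the monotonicity $\He V_t \ge (\He V_{t_0}^{-1} + C_t - C_{t_0})^{-1}$. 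That monotonicity holds because the Hessian is least convex at zero, so the flow of the lower bound is dominated by its Gaussian (quadratic) evolution. This yields $\dot\lambda_t \gtrsim 1/t$, and hence the integrability in the criterion.
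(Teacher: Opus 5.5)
\textbf{The gap is in step (ii), which is where the content of the theorem lies.} Brascamp--Lieb applied to the cosh interaction only uses the pointwise bound on the bare Hessian, $\mu_\epsilon\cosh(2b\varphi(x))\ge\mu_\epsilon=\mu\epsilon^{2b^2}$. It gives $\chi_t\le t/(1+t\mu_\epsilon)$, i.e.\ $1/t-\chi_t/t^2\ge\mu_\epsilon/(1+t\mu_\epsilon)$, and this tends to $0$ as $\epsilon\to0$; the paper remarks on exactly this. The renormalised coupling $\mu_\epsilon\E[\cosh(2b\zeta)]\approx\mu t^{b^2}$ is an average over the fluctuation field, not a pointwise lower bound. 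So no convexity argument on the bare potential can produce it, and applying Brascamp--Lieb to $V_{t_0}$ instead presupposes the convexity you are trying to prove.

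Your propagation step $\He V_t\ge(H^{-1}+C_t-C_{t_0})^{-1}$ has a second problem: it needs a uniform, positive definite lower bound $H$ on the Hessian of $V_{t_0}$. What DSS plus Perron--Frobenius actually gives is $\mathrm{cov}(\mu^\varphi_{t_0})\le\chi_{t_0}\,\mathrm{id}$, i.e.\ $H=C_{t_0}^{-1}-\chi_{t_0}C_{t_0}^{-2}$. This is negative on all Fourier modes with $-\frac{1}{2\pi}\hat\Delta_\epsilon(p)>1/\chi_{t_0}-1/t_0$. (Relatedly, DSS dominates covariances entrywise, so it does not give $\He V_t(\varphi)\ge\He V_t(0)$ as quadratic forms.) Convexity on those high modes comes from the Gaussian part $A(tA+1)$ of the renormalised action, not from $V_{t_0}$.

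\textbf{The fix uses ingredients you already have, placed differently.} For $t\le t_0$ no perturbation theory is needed: convexity of $\cosh$ gives $\chi_t\le t$, so $\dot\lambda_t=1/t-\chi_t/t^2\ge0$ and your condition (i) holds with $K=0$.

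The perturbative estimate is needed at the single scale $t_0=\kappa\mu^{-1/(1+b^2)}$ with $\kappa$ small, where it still applies. The paper proves $\chi_t\le t[1-c\mu t^{1+b^2}+(C\mu t^{1+b^2})^2]$ from two inputs:
\begin{itemize}
\item the Schwinger--Dyson equation $C_t-\langle\varphi(0)\varphi(\cdot)\rangle_t=\frac{\mu_\epsilon}{2b}\,C_t*\langle\varphi(0)\sinh(2b\varphi(\cdot))\rangle_t$;
\item the lower bound $\langle F\rangle\ge\E_C[F]-\E_C[F;V]$ for absolutely monotone $F$, obtained from the Brydges--Fr\"ohlich--Sokal form of Newman's Gaussian inequality and Griffiths II.
\end{itemize}
This yields $\dot\lambda_{t_0}\ge m^2\gtrsim1/t_0$ uniformly in $\epsilon$ and $L$. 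The direction of that inequality matters. ``Dropping the positive potential'', as you propose, bounds $\langle\cosh(2b\zeta)\rangle$ from above, whereas your term $\E[\He V_0]$ needs a lower bound.

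The paper then stops the flow at $t_0$ rather than running it to $t=\infty$:
\begin{itemize}
\item $\dot C_{t_0}\He S_{t_0}\dot C_{t_0}\ge m^2\dot C_{t_0}$ gives Bakry--\'Emery for the renormalised measure $\nu_{t_0}$;
\item the fluctuation measure has Hessian $\ge C_{t_0}^{-1}\ge1/t_0$, hence log-Sobolev constant $t_0$;
\item the entropy decomposition $\ent_{\nu_0}(F)=\ent_{\nu_{t_0}}(P_{0,t_0}F)+\E_{\nu_{t_0}}[\ent_{\mu^\varphi_{t_0}}(F)]$, combined with the gradient contraction of the Polchinski semigroup (using only $\lambda_t\ge0$), gives $1/\gamma\lesssim 1/m^2+t_0\lesssim1+\mu^{-1/(1+b^2)}$.
\end{itemize}
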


We expect with sufficient effort the condition $b^2\in(0,1/2)$ in Theorem~\ref{thm:lsi} could be generalised to the optimal range 
$b^2 \in (0,1)$, using the ideas employed to treat the hierarchical version of the model in \cite{2609.17461},
see Section~\ref{sec:proof}. There is no prinicipal obstruction.

We write $\nu=\nu^{\ShG(b,\mu|L,\epsilon)}$ for the measure \eqref{e:shG-measure} to emphasize the dependence on the parameter $(b,\mu)$
and the regularization parameters $(L,\epsilon)$.

\begin{corollary}\label{cor:tightness}
For any $b^2 \in (0,1/2)$ and $\mu>0$,
the measure $\nu=\nu^{\ShG(b,\mu|L,\epsilon)}$ satisfies
\begin{equation} \label{e:Herbst}
    \E_\nu\qa{e^{(f,\varphi)}} \lesssim e^{\frac{1}{2\gamma}\|f\|_{L^2(\Lambda_\epsilon)}^2}.
\end{equation}
In particular, the family of measures $\nu^{\ShG(b,\mu|L,\epsilon)}$ on $\cS'(\R^2)$
is tight in $\epsilon$ and $L$.
\end{corollary}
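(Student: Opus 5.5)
The plan is to derive \eqref{e:Herbst} from Theorem~\ref{thm:lsi} by the Herbst argument, and then to deduce tightness on $\cS'(\R^2)$ from uniform control of exponential moments of smeared fields.

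\textbf{Step 1: Herbst argument.} Fix $f$ and set $H(\varphi)=(f,\varphi)$. This is a linear function with $L^2(\Lambda_\epsilon)$ gradient $\nabla^\epsilon H = f$, so $\|\nabla^\epsilon H\|_{L^2(\Lambda_\epsilon)}^2=\|f\|_{L^2(\Lambda_\epsilon)}^2$. Note that Theorem~\ref{thm:lsi} should be read with the squared norm $\|\nabla^\epsilon\sqrt F\|^2_{L^2(\Lambda_\epsilon)}$.
\begin{itemize}
\item Apply Theorem~\ref{thm:lsi} to $F=e^{\lambda H}$. Then $\nabla^\epsilon\sqrt F=\frac{\lambda}{2}f\sqrt F$.
\item With $Z(\lambda)=\E_\nu[e^{\lambda H}]$, this gives
\begin{equation*}
\lambda Z'(\lambda)-Z(\lambda)\log Z(\lambda)\leq \frac{\lambda^2}{2\gamma}\|f\|_{L^2(\Lambda_\epsilon)}^2 Z(\lambda).
\end{equation*}
\item The left-hand side equals $\lambda^2 Z\,\frac{d}{d\lambda}\big(\lambda^{-1}\log Z\big)$. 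Hence $\frac{d}{d\lambda}\big(\lambda^{-1}\log Z\big)\le \|f\|^2/(2\gamma)$.
\item Integrate from $0$ to $1$, using $\lambda^{-1}\log Z(\lambda)\to \E_\nu[H]$ as $\lambda\to0$. This gives
\begin{equation*}
\E_\nu[e^{(f,\varphi)}]\le e^{\E_\nu[(f,\varphi)]+\frac{1}{2\gamma}\|f\|_{L^2(\Lambda_\epsilon)}^2}.
\end{equation*}
\end{itemize}
To make this rigorous, $e^{\lambda H}$ has to be admissible in the theorem. I would justify this by truncating $H$, or by noting that $\nu$ has Gaussian tails for each fixed regularisation, and then pass to the limit.

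\textbf{Step 2: removing the mean.} The measure $\nu$ is invariant under $\varphi\mapsto-\varphi$, because $\cosh$ is even and the Gaussian part is symmetric. Therefore $\E_\nu[(f,\varphi)]=0$, and this yields \eqref{e:Herbst}. In fact the constant in $\lesssim$ can be taken to be $1$. Since $\gamma$ is uniform in $\epsilon$ and $L$, so is the bound.

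\textbf{Step 3: tightness.} Embed $\nu$ into $\cS'(\R^2)$ by periodic extension, or by viewing $\varphi$ as a distribution via piecewise-constant extension and pairing with test functions $g$. For $g\in\cS(\R^2)$, apply \eqref{e:Herbst} to $f=\pm t\,g|_{\Lambda_\epsilon}$ and use that $\|g\|_{L^2(\Lambda_\epsilon)}$ is bounded uniformly by a Schwartz seminorm of $g$. This gives uniform second moments
\begin{equation*}
\E_\nu[(g,\varphi)^2]\lesssim \gamma^{-1}\|g\|^2.
\end{equation*}
Tightness then follows from a standard criterion, for instance Mitoma's criterion or a Minlos/Sobolev argument. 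I would bound $\E_\nu\|\varphi\|^2_{H^{-s}_{\rm loc}}$, weighted by $(1+|x|^2)^{-k}$, by summing the moment bound over an orthonormal basis of Hermite functions $h_n$. For large $s$ and $k$ the sum $\sum_n \lambda_n^{-s}\|h_n\|^2$ converges. Compact embedding of weighted Sobolev spaces into $\cS'$ then gives tightness.

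\textbf{Main obstacle.} The only delicate point is uniformity in the lattice: controlling the discrete pairing $(g,\varphi)_{\Lambda_\epsilon}$ and the periodisation with $L$ uniformly. I expect this to be routine, since all the bounds involve only $\|g\|_{L^2}$ on the lattice, which is dominated by continuum Schwartz seminorms.
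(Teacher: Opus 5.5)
Your proposal is correct and follows the paper's proof. The paper also obtains \eqref{e:Herbst} from Theorem~\ref{thm:lsi} via the Herbst argument (with the gradient term read as a squared norm, as you note), and then deduces tightness from the Bochner--Minlos theorem using the continuity of the characteristic functional given by the uniform exponential bound; your explicit second-moment bound in Hermite--Sobolev norms is the standard proof of that Minlos criterion, so the two arguments coincide in substance.
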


\begin{proof}
The exponential integrability follows from the Herbst argument \cite[Theorem~5.4.1]{MR3155209},
and tightness in $\cS'(\R^2)$ then from the Bochner--Minlos theorem and 
continuity of the characteristic functional implied by the exponential integrability.
\end{proof}

For finite $L > 0$, the existence of the continuum limit $\epsilon\to 0$ is, in fact,
a standard consequence from the theory of the Gaussian multiplicative chaos \cite{MR3274356}. 
Corollary~\ref{cor:tightness} gives the existence of subsequential infinite volume limits,
which was open, and provides Gaussian integrability properties.

Using the tightness together with methods developed for $\varphi^4_2$ models \cite{MR887102,MR0489552},
the existence of the limit without subsequence can probably also be derived. 
However, we expect the following stronger result is accessible.
By extending the Holley--Stroock uniqueness method developed 
for the $\varphi^4_2$ model in \cite{2504.08606},  we expect the log-Sobolev inequality
is the main physical ingredient to show uniqueness of the invariant measure of the massless sinh-Gordon SPDE on $\R^2$
as well as exponential correlation decay.

\paragraph{Notation}

We write $\lesssim$ to denote inequalities up to constants, where implicit constants typically depend on $\delta>0$ if $b\in (0,1/2-\delta)$,
but are uniform in $\epsilon,L$ and $\mu, t$.

\subsection{Proof of log-Sobolev inequality}
\label{sec:proof}

Define the regularised massive sinh-Gordon measure with external squared mass $1/t$ for $t >0$ by
the probability density proportional to
\begin{equation} \label{e:shG-measure-t}
\exp\qa{\frac{1}{4\pi} (\varphi,\Delta_\epsilon\varphi) -\frac{1}{2t} (\varphi,\varphi) - \frac{\mu_\epsilon}{4b^2} \int_{\Lambda_{\epsilon}} \cosh(2b \varphi(x)) \, dx},
\qquad \mu_\epsilon = \mu \epsilon^{2b^2},
\end{equation}
and denote its expectation by $\avg{\cdot}_t$.
Let $\chi_t$ be the susceptibility with external mass $1/t$:
\begin{equation}
    \chi_t = \int_{\Lambda_\epsilon} \avg{\varphi(0)\varphi(x)}_t \, dx,
\end{equation}
where $\avg{\cdot}_t$ denotes the expectation with respect to the measure \eqref{e:shG-measure-t}.

\begin{proposition} \label{prop:suscept}
    Let $b^2 < 1/2$ and $\mu>0$. Then there are $c=c(b)>0$ and $C=C(b)>0$ such that
    \begin{equation} \label{e:chit-bd}
        \chi_t \leq
        t - c\mu t^{2+b^2} + C^2\mu^2 t^{3+2b^2}
        = t \qa{1 - c\mu t^{1+b^2} + (C\mu t^{1+b^2})^2 }.
    \end{equation}
    In particular, there is $t_0 \approx \mu^{-1/(1+b^2)}>0$  and $m^2>0$ both independent of $\epsilon$ such that
    \begin{equation}
        \label{e:chit-0}
        \frac{1}{t_0} - \frac{\chi_{t_0}}{t_0^2} \geq m^2 > 0.
    \end{equation}
\end{proposition}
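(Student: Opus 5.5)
The plan is to derive an exact second-order identity for $\chi_t$ by Gaussian integration by parts along the constant mode. Then the first-order term is bounded below and the second-order terms above, using correlation inequalities valid for the $\cosh$ potential together with second-moment bounds for the Gaussian multiplicative chaos. The second moments are finite exactly when $b^2<1/2$.

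\textbf{Step 1: exact identity.} Write $V(\varphi)=\frac{\mu_\epsilon}{4b^2}\int_{\Lambda_\epsilon}\cosh(2b\varphi(x))\,dx$ and $\Phi=\int_{\Lambda_\epsilon}\varphi(x)\,dx$. Since $\Delta_\epsilon$ annihilates constants, the Gaussian part of \eqref{e:shG-measure-t} has covariance $C_t=(-\Delta_\epsilon/(2\pi)+1/t)^{-1}$, and $C_t\mathbf 1=t\mathbf 1$. By translation invariance $\chi_t=|\Lambda_\epsilon|^{-1}\avg{\Phi^2}_t$. Integrating by parts twice in the direction $\mathbf 1$ (equivalently, differentiating $s\mapsto\avg{e^{s\Phi}}_t$ after the shift $\varphi\mapsto\varphi+st\mathbf 1$) gives
\begin{equation*}
\chi_t = t - \frac{t^2}{|\Lambda_\epsilon|}\avg{(\mathbf 1, V''\mathbf 1)}_t + \frac{t^2}{|\Lambda_\epsilon|}\mathrm{Var}_t\big((\mathbf 1,\nabla V)\big).
\end{equation*}
Here $(\mathbf 1,V''\mathbf 1)=\mu_\epsilon\int\cosh(2b\varphi)\,dx$ and $(\mathbf 1,\nabla V)=\frac{\mu_\epsilon}{2b}\int\sinh(2b\varphi)\,dx$. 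By translation invariance the middle term equals $t^2\mu_\epsilon\avg{\cosh(2b\varphi(0))}_t$, and the last term equals $t^2\frac{\mu_\epsilon^2}{4b^2}\int\avg{\sinh(2b\varphi(0));\sinh(2b\varphi(x))}_t\,dx$.

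\textbf{Step 2: Gaussian domination and GMC moments.} I will use that the $\cosh$ potential is even and absolutely monotone, so the Ginibre/GHS-type inequalities (available in the GHS class, cf.\ Section~\ref{sec:DSS}) give domination by the Gaussian:
\begin{equation*}
\avg{e^{(f,\varphi)}}_t\le \E_{C_t}[e^{(f,\varphi)}]=e^{\frac12(f,C_tf)}.
\end{equation*}
In particular $\avg{\sinh(2b\varphi(0))\sinh(2b\varphi(x))}_t\le \frac12\sinh$-free bound $e^{4b^2C_t(0,0)}\cosh(4b^2C_t(0,x))$ up to constants. Because $\sinh$ is odd, $\avg{\sinh}_t=0$, so this bounds the covariance directly. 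On the lattice, $C_t(0,0)=\log(\sqrt t/\epsilon)+O(1)$ and $C_t(0,x)\approx\log(\sqrt t/|x|)$ for $|x|\lesssim\sqrt t$, with exponential decay beyond. Hence
\begin{equation*}
\mu_\epsilon^2\int e^{4b^2C_t(0,0)}e^{4b^2C_t(0,x)}\,dx\lesssim \mu^2 t^{2b^2}\int_{|x|\lesssim\sqrt t}(\sqrt t/|x|)^{4b^2}dx\lesssim\mu^2t^{1+2b^2}.
\end{equation*}
The integral converges at $x=0$ precisely because $4b^2<2$. This gives the $C^2\mu^2t^{3+2b^2}$ term in \eqref{e:chit-bd}.

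\textbf{Step 3: lower bound on $\avg{\cosh}_t$ (main obstacle).} At $\mu=0$ one has $\mu_\epsilon\E_{C_t}[\cosh(2b\varphi(0))]=\mu_\epsilon e^{2b^2C_t(0,0)}\approx c\mu t^{b^2}$, which is the size needed. The difficulty is that under the interacting measure $\cosh$ is suppressed, so a lower bound is needed. I would write
\begin{equation*}
\avg{\cosh(2b\varphi(0))}_\mu=\avg{\cosh(2b\varphi(0))}_0-\int_0^\mu \frac{1}{4b^2}\epsilon^{2b^2}\int\avg{\cosh(2b\varphi(0));\cosh(2b\varphi(x))}_{\mu'}\,dx\,d\mu'.
\end{equation*}
The truncated correlation is bounded above by the full second moment, and Gaussian domination as in Step 2 gives $\lesssim \mu t^{2b^2}\cdot t^{1-b^2}\cdot t^{b^2}$ after multiplying by $\mu_\epsilon$. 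This yields $\mu_\epsilon\avg{\cosh}_t\ge c\mu t^{b^2}-C'\mu^2t^{1+2b^2}$, so the negative term in the identity is at most $-c\mu t^{2+b^2}+C'\mu^2t^{3+2b^2}$. Combined with Step 2, this proves \eqref{e:chit-bd}.

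If Gaussian domination for these particular observables is not directly available, I would instead fall back on an explicit perturbative comparison valid for absolutely monotone potentials (Jensen's inequality in the exponent).

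\textbf{Step 4: consequence.} The bound \eqref{e:chit-bd} gives
\begin{equation*}
\frac1{t}-\frac{\chi_t}{t^2}\ge \frac1t\big(c s-(Cs)^2\big),\qquad s=\mu t^{1+b^2}.
\end{equation*}
Choose $s_0=c/(2C^2)$, i.e.\ $t_0=(s_0/\mu)^{1/(1+b^2)}\approx\mu^{-1/(1+b^2)}$. Then $m^2=cs_0/(2t_0)>0$, which is independent of $\epsilon$ and $L$. This proves \eqref{e:chit-0}.
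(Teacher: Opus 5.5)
Your identity in Step~1 is correct and is a legitimate substitute for the paper's Schwinger--Dyson equation, and Step~4 is fine. The gap is in Steps~2 and~3: the bounds there are not uniform in $L$, and uniformity in the volume is the whole content of the proposition.

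In both steps you bound a truncated correlation by an untruncated second moment, and the untruncated moment is not summable over the torus. It is $C_t(0,x)$ that decays exponentially, not $e^{4b^2C_t(0,x)}$, which tends to $1$ for $|x|\gg\sqrt t$. So the region $|x|\gtrsim\sqrt t$ that you discard contributes a term of order $\mu^2t^{2b^2}L^2$ to $\mu_\epsilon^2e^{4b^2C_t(0)}\int_{\Lambda_\epsilon}e^{4b^2C_t(0,x)}\,dx$. In Step~3 the failure cannot be avoided with your bound: since $\cosh\ge1$, the full second moment $\langle\cosh(2b\varphi(0))\cosh(2b\varphi(x))\rangle_t$ is at least $1$ for every $x$, so its integral over $\Lambda_\epsilon$ is at least $L^2$. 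What you need are bounds with the Gaussian \emph{truncated} profiles $e^{4b^2C_t(0)}\sinh(4b^2C_t(0,x))$ and $e^{4b^2C_t(0)}(\cosh(4b^2C_t(0,x))-1)$. Their $L^1$ norms are $O(t)$ by Lemma~\ref{lem:cov}.

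Step~2 can be repaired with the tool you already invoke, applied monomial by monomial rather than through exponential moments. By the second Griffiths inequality, $\lambda\mapsto\langle G\rangle_\lambda$ is nonincreasing for every $G$ with nonnegative Taylor coefficients. The product $\sinh(2b\varphi(0))\sinh(2b\varphi(x))$ is such a $G$, so
\[
\langle\sinh(2b\varphi(0));\sinh(2b\varphi(x))\rangle_t\le e^{4b^2C_t(0)}\sinh(4b^2C_t(0,x)),
\]
which is integrable. It was dropping the term $-\tfrac12\langle\cosh(2b(\varphi(0)-\varphi(x)))\rangle_t$ that destroyed the decay.

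Step~3 needs a genuinely truncated input, because domination only gives upper bounds and here you need a lower bound on an even one-point function. There are two options:
\begin{itemize}
\item Use the paper's Brydges--Fr\"ohlich--Sokal based inequality $\langle F\rangle\ge\Eg_C[F]-\Eg_C[F;V]$ with $F=\cosh(2b\varphi(0))$.
\item Make your Jensen fallback precise by first performing the Cameron--Martin shift $\varphi\mapsto\varphi+2bC_t(0,\cdot)$. Using $\langle\cosh(2b\varphi(0))\rangle_t=\langle e^{2b\varphi(0)}\rangle_t$,
\[
\langle e^{2b\varphi(0)}\rangle_t = e^{2b^2C_t(0)}\big\langle e^{-[V(\varphi+2bC_t(0,\cdot))-V(\varphi)]}\big\rangle_t \ge e^{2b^2C_t(0)}\exp\Big(-\frac{\mu_\epsilon}{4b^2}\langle\cosh(2b\varphi(0))\rangle_t\,\|\cosh(4b^2C_t)-1\|_1\Big).
\]
By domination the exponent is $O(\mu t^{1+b^2})$. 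Without the shift, Jensen produces an extensive exponent and fails for the same volume reason.
\end{itemize}

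With these repairs your route goes through using only Griffiths-type domination plus Jensen. The paper instead combines Schwinger--Dyson with a single application of the BFS Gaussian inequality to the odd--odd correlation $\langle\varphi(0)\sinh(2b\varphi(z))\rangle_t$.
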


\begin{remark}
Note that convexity of the $\cosh$ potential and the Brascamp--Lieb inequality imply that $1/t -\chi_t/t^2 > 0$ for any $t>0$, but do not give a strictly positive lower bound that is uniform in $\epsilon$. 
We expect a bound similar to~\eqref{e:chit-0} can be shown for $b^2 \in [1/2,1)$, but a different proof is required and the last term in ~\eqref{e:chit-bd} will not be quadratic in $\mu$, but instead appear with an exponent $p\in(1,2)$ depending on $b^2 \in [1/2,1)$:
\begin{equation} \label{e:chit-bd-p}
    \chi_t 
    \leq t \qa{1 - c\mu t^{1+b^2} + (C_p\mu t^{1+b^2})^p }.
\end{equation}
For the hierarchical sinh-Gordon model, such a bound for $b^2 \in [1/2,1)$ is obtained in \cite{2609.17461}.
\end{remark}

Using \eqref{e:chit-0} we now prove Theorem~\ref{thm:lsi}.
The proof of Proposition~\ref{prop:suscept} is given in Section~\ref{sec:suscept}.

\begin{proof}[Proof of Theorem~\ref{thm:lsi}]
We use the Polchinski equation method to prove log-Sobolev inequalities \cite{MR4303014,MR4798104},
applied with Pauli--Villars covariance decomposition.
This decomposition preserves correlation inequalities along
the renormalisation group flow \cite{MR4720217}. For $t>0$, define
\begin{equation}
C_t = (A+1/t)^{-1}, \qquad A = -\frac{1}{2\pi}\Delta_\epsilon
,
\end{equation}
denote its $t$-derivative by $\dot C_t$,
and set
\begin{equation}
     \dot\lambda_t = \frac{1}{t}-\frac{\chi_t}{t^2}.
\label{eq: dot lambda definition}
\end{equation}
The renormalised potential is defined by
\begin{equation}
V_t(\varphi) = -\log \Eg_{C_t}\qa{e^{-V(\varphi+\zeta)}},
\end{equation}
where $\Eg_C$ denotes the Gaussian expectation with covariance $C$ and the ``bare'' potential is given by
\begin{equation}
V(\varphi) = \frac{\mu_\epsilon}{4b^2}\int_{\Lambda_{\epsilon}}\cosh(2b\varphi(x))\, dx;
\end{equation}
see \cite{MR4303014,MR4798104} for background. 
The renormalised measure $\nu_t$ is the probability measure defined as:
\begin{equation}
\nu_t(d\varphi) 
\propto
e^{-S_t(\varphi)}\, d\varphi,\qquad 
S_t(\varphi) 
= 
\frac{1}{2}\big(\varphi,(C_\infty-C_t)^{-1}\varphi\big) + V_t(\varphi)
,
\end{equation}
where we use the interpretation $(C_\infty-C_t)^{-1} = A(tA+1)$.
By construction the sinh-Gordon measure \eqref{e:shG-measure} satisfies $\nu = \nu_0$.
Then, uniformly in the field $\varphi \in \R^{\Lambda_{\epsilon}}$, for any $t>0$,
\begin{equation} \label{e:Vt-convex}
    \dot C_t\He V_t(\varphi)\dot C_t -\frac12 \ddot C_t  \geq \dot \lambda_t \dot C_t
    .
\end{equation}
This follows exactly as in \cite[Section~2]{MR4720217} for the $\varphi^4$ potential,
replacing the Ding--Song--Sun inequality by our generalisation for the cosh potential from Section~\ref{sec:DSS}.
For completeness, we sketch the argument.
By direct computation,
\begin{equation}
\He V_t(\varphi) = C_t^{-1} - C_t^{-1} \cov(\mu_t^\varphi) C_t^{-1},
\end{equation}
where the fluctuation measure $\mu_t^\varphi$ is defined by
\begin{equation}
    \E_{\mu_t^\varphi}[F]
    = e^{+V_t(\varphi)} \Eg_{C_t}\qa{e^{-V_0(\varphi+\zeta)} F(\varphi+\zeta)}.
\end{equation}
Since $C_t^{-1}=A+1/t$ is ferromagnetic,
the DSS inequality (or more precisely inequality \eqref{eq: 2 point DSS estimate})
together with the Perron--Frobenius theorem imply $\cov(\mu_t^\varphi) \leq \chi_t \id$
as quadratic forms.
Since $\dot C_t = t^{-2} C_t^2$ and $-\frac12 \ddot C_t = \frac{1}{t} A C_t \dot C_t$ for the Pauli--Villars decomposition thus
\begin{equation}
\dot C_t \He V_t(\varphi) \dot C_t - \frac12 \ddot C_t \geq \qa{ \frac{1}{t} - \frac{\chi_t}{t^2}} \dot C_t.
\label{eq: convexification}
\end{equation}

As a consequence, we now show that \eqref{e:chit-0} implies
that the measure $\nu=\nu_0$ satisfies the following log-Sobolev inequality:
\begin{equation}
    \ent_{\nu_0}(F)
    \leq 2\qa{\frac{1}{m^2} + t} \E_{\nu_0}\qa{(\nabla^\epsilon \sqrt F)^2}.
\end{equation}
Since $t_0 \approx \mu^{-1/(1+b^2)}$ and $m^2 \approx 1$, 
the log-Sobolev constant satisfies  $1/\gamma \lesssim 1+\mu^{-1/(1+b^2)}$.

Indeed, the Hessian of the action $S_t$ of the renormalised measure $\nu_t$ reads:
\begin{equation}
    \He S_t = (C_\infty-C_t)^{-1} + \He V_t = -\frac12 \dot C_t^{-1}\ddot C_t \dot C_t^{-1}+ \He V_t
\end{equation}
where we used that, for the Pauli--Villars decomposition,
\begin{equation}
    (C_\infty-C_t)^{-1}
    = A(tA+1) = - \frac12 \dot C_t^{-1}\ddot C_t \dot C_t^{-1}.
\end{equation}
The above implies that for $t =t_0$, one has by \eqref{eq: convexification} and \eqref{e:chit-0},
\begin{equation}
    \dot C_t\He S_t\dot C_t \geq m^2\dot C_t .
\end{equation}
The Bakry--\'Emery condition gives the log-Sobolev inequality
\begin{equation}
\ent_{\nu_t}(F)
\leq 
\frac{2}{m^2}\E_{\nu_t}\qa{(\nabla^\epsilon \sqrt{F})_{\dot C_t}^2}
.
\label{eq: LSI renormalised}
\end{equation}
We remark that, in the standard presentation, see for example \cite[Theorem~2.8]{MR4798104},
one has the identity matrix instead of $\dot C_t$. The above simply corresponds to a rescaling.
Indeed, define the measure $\tilde\nu_t(d\psi)$ such that $\E_{\tilde\nu_t}[F(\varphi)] = \E_{\nu_t}[F(\dot C^{-1/2}_t\varphi)]$. 
It has action $\tilde S_t(\varphi) =S_t(\dot C^{1/2}_t\varphi)$ so that
$\He\tilde S_t \geq  m^2 \id$,
and the usual formulation of Bakry-\'Emery with $G(\varphi) = F(\dot C^{1/2}_t\varphi)$ gives:
\begin{equation}
\ent_{\nu_t}(F)
=
\ent_{\tilde \nu_t}(G)
\leq 
\frac{2}{m^2} 
\E_{\nu_t}\qa{(\nabla^\epsilon \sqrt{G})^2}
=
\frac{2}{m^2}\E_{\nu_t}\qa{(\nabla^\epsilon \sqrt{F})_{\dot C_t}^2}
.
\end{equation}

On the other hand, using the Polchinski semigroup $\PP_t$ and the fluctuation measure $\mu_t^\varphi$
which we recall are defined by
\begin{equation}
    \PP_{0,t}F(\varphi)
    = \E_{\mu_t^\varphi}[F]
    = e^{+V_t(\varphi)} \Eg_{C_t}\qa{e^{- V(\varphi+\zeta)} F(\varphi+\zeta)},
\label{eq: fluctuation measure}
\end{equation}
one has the following decomposition of the sinh-Gordon measure \eqref{e:shG-measure}
\begin{equation} \label{e:measure-decomp}
    \E_{\nu_0}[F]
    =\E_{\nu_t}\qb{\PP_{0,t} F}
    =\E_{\nu_t}\qb{\E_{\mu_t^\varphi} F};
\end{equation}
again see \cite{MR4303014,MR4798104} for background.
Under the assumption \eqref{e:Vt-convex},
the Polchinski semigroup satisfies the gradient estimate \cite[(2.46)]{MR4303014}:
uniformly in $\varphi$,
\begin{equation}
    (\nabla^\epsilon \sqrt{\PP_{0,t} F})^2_{\dot C_t} \leq e^{-2\lambda_t} \PP_{0,t}\qa{(\nabla^\epsilon \sqrt F)^2}.
\label{eq: contraction exponentielle}
\end{equation}
The Hamiltonian of the fluctuation measure $\mu_t^\varphi$ defined in \eqref{eq: fluctuation measure} is strictly convex and
bounded from below by the quadratic form $C_t^{-1} \geq 1/t$. Thus by the Bakry-\'Emery Theorem, we get
\begin{equation}
\ent_{\mu_t^\varphi}(F) \leq 2 t \, \E_{\mu_t^\varphi}\qa{(\nabla^\epsilon \sqrt{F})^2}.
\label{eq: LSI fluctuation}
\end{equation}

Putting the bounds \eqref{eq: LSI renormalised}, \eqref{eq: LSI fluctuation}
together and using the measure decomposition \eqref{e:measure-decomp} 
with $t = t_0$ and the corresponding decomposition
of the relative entropy yields the claim:
\begin{align}
    \ent_{\nu_0}(F)
    &= \ent_{\nu_t}(\PP_{0,t} F) +\E_{\nu_t}\qa{\ent_{\mu_t}(F)}
    \nnb
    & \leq \frac{2}{m^2}\E_{\nu_t}\qa{(\nabla^\epsilon \sqrt{\PP_{0,t} F})_{\dot C_t}^2}
 + 2 t \;\E_{\nu_t}\qa{ (\nabla^\epsilon \sqrt{F})^2} \nnb
    &\leq 2\qa{\frac{1}{m^2} + t} \E_{\nu_0}\qa{(\nabla^\epsilon \sqrt F)^2},
\end{align}
where we used \eqref{eq: contraction exponentielle} and $\lambda_t \geq 0$ (to simplify the statement) in the last inequality.
\end{proof}

To complete the proof of Proposition~\ref{prop:suscept}, we begin, in Section~\ref{sec:pert} below,
with a general perturbation theoretic bound for absolutely monotone potentials and
then apply it to the sinh-Gordon model.
The proof of the proposition is then completed in Section~\ref{sec:suscept}.

\subsection{Perturbation theory bound}
\label{sec:pert}

Throughout this section $\Lambda$ is a finite set,
$A$ is ferromagnetic (i.e., $A_{xy} \leq 0$ for $x\neq y$),
and we consider the probability measure with expectation
\begin{equation} \label{e:measure-pert}
    \avg{F} \propto \int F(\varphi) e^{-\frac{1}{2}(\varphi,A\varphi)-V(\varphi)} \, d\varphi
    .
\end{equation}
A function $F: \R^\Lambda \to \R$ is absolutely monotone if it is the restriction of an entire
function  and its power series expansion at $0$ has positive coefficients.

\begin{proposition}
    Let $F$ be an absolutely monotone function with subexponential growth at infinity
    and assume $V(\varphi) = U(\frac12 \varphi^2)$ with $U$ absolutely monotone.
    Then
\begin{equation} \label{e:pert-gen}
\avg{F} \geq \Eg_C[F] - \Eg_C[F;V],
\end{equation}
where $\Eg_C[\cdot]$ and $\Eg_C[\cdot;\cdot]$ denote the Gaussian expectation and covariance with covariance $C=A^{-1}$
and we assume that $A$ is invertible.
\end{proposition}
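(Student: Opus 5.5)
Interpolate in the strength of the potential and use convexity of the resulting function. For $s\in[0,1]$, let $\avg{\cdot}_s$ denote the measure \eqref{e:measure-pert} with $V$ replaced by $sV$, and set $f(s)=\avg{F}_s$. Then $f(0)=\Eg_C[F]$ and $f(1)=\avg{F}$. By the subexponential growth of $F$ and since $V\ge U(0)$ is bounded below with the Gaussian factor dominating, differentiation under the integral is justified, giving
\begin{equation}
f'(s) = -\avg{F;V}_s, \qquad f''(s) = \avg{F;V;V}_s,
\end{equation}
where $\avg{F;V;V}_s$ is the third joint cumulant (truncated correlation) of $F,V,V$ under $\avg{\cdot}_s$. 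If $f''\ge 0$ on $[0,1]$, then $f$ is convex, and $f(1)\ge f(0)+f'(0)=\Eg_C[F]-\Eg_C[F;V]$, which is exactly \eqref{e:pert-gen}. So everything reduces to showing $\avg{F;V;V}_s\ge 0$.

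The key step is therefore a nonnegativity statement for the third cumulant, which I would prove by expanding in monomials. Since $F$ and $V=U(\frac12\varphi^2)$ are absolutely monotone, both expand in monomials $\varphi^\alpha$ with nonnegative coefficients; $V$ contains only even monomials. By multilinearity of cumulants, and after justifying the interchange of sums (entire functions with the Gaussian tail), it suffices to show
\begin{equation}
\avg{\varphi^\alpha;\varphi^{\beta};\varphi^{\gamma}}_s\ge 0
\end{equation}
for $\beta,\gamma$ even multi-indices and $\alpha$ arbitrary. By the $\varphi\mapsto-\varphi$ symmetry, odd $\alpha$ gives zero, so only even $\alpha$ matters.

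The measure $\avg{\cdot}_s$ has ferromagnetic quadratic part $A$ and single-site potential $sU(\frac12\varphi_x^2)$ with $U$ absolutely monotone. Its single-site measures $e^{-\frac12A_{xx}\varphi^2-sU(\varphi^2/2)}$ are of the form $e^{-\text{(even, convex-type in }\varphi^2)}$. For such measures one expects a GHS-type or Lebowitz-type inequality for even observables, but with the sign needed here: the GHS inequality says that $\avg{\varphi_x;\varphi_y;\varphi_z}\le0$ in a field, whereas here we need positivity of third cumulants of even monomials at zero field.

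My first attempt would be to expand $e^{-sV}=\sum_n \frac{(-s)^n}{n!}V^n$ around the Gaussian and use Wick's theorem. All Gaussian moments of monomials are nonnegative, since $C=A^{-1}$ has nonnegative entries because $A$ is ferromagnetic and invertible (assuming positive definiteness). However, this produces signs $(-1)^n$ and does not obviously give a sign. The cleaner route, which I would pursue, is the duplicated-variable (Ginibre/Sylvester) method: write $\varphi=(q+q'+\dots)$ with rotated copies, and show the cumulant is a sum of positive terms. Because $U$ is absolutely monotone in $\frac12\varphi^2$, the single-site weight, viewed as a function of $\varphi^2$, has an appropriate log-convexity-type property, namely $-U$ is concave in $\varphi^2$. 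Ginibre's inequality for even functions then gives nonnegativity of the relevant truncated expectations.

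I expect this sign verification for the third cumulant to be the main obstacle, specifically identifying the correct correlation inequality. Alternatively, one can avoid the third cumulant and instead prove directly that $s\mapsto\avg{F;V}_s$ is nonincreasing, by differentiating once more and using a Griffiths-II/Ginibre-type inequality adapted to the measures with absolutely monotone $U$.
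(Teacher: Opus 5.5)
\textbf{There is a genuine gap: the step that carries all the content is not proved.} Your reduction itself is fine. With $f(s)=\avg{F}_s$ one has $f(1)=f(0)-\int_0^1\avg{F;V}_s\,ds$, and convexity of $f$ would give the claim. But this places the whole proposition in the inequality $\avg{F;V;V}_s\geq 0$ for $s\in(0,1]$, and you give no argument for it. Your ``alternative'' is not a second route: $\frac{d}{ds}\avg{F;V}_s=-\avg{F;V;V}_s$, so proving that $s\mapsto\avg{F;V}_s$ is nonincreasing is the same third-cumulant statement.

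This is not a standard correlation inequality, and the tool you point to does not deliver it. Ginibre's duplicated-variable method gives second-order, GKS-II type statements $\avg{A;B}\geq 0$; it does not sign third cumulants. In general, third cumulants of even, positive-coefficient observables in ferromagnets have no sign. For example, for two Ising spins with coupling $J>0$, $\avg{\sigma_1\sigma_2;\sigma_1\sigma_2;\sigma_1\sigma_2}=-2\tanh J\,\mathrm{sech}^2 J<0$. So any positivity would have to come from further structure, such as the site-wise evenness of $V$, and you supply no mechanism for that. At $s=0$ the sign does hold, by Wick's theorem with $C\geq 0$ entrywise. That says nothing about $s>0$, as your own attempt to expand $e^{-sV}$ shows.

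\textbf{The missing idea.} You do not need monotonicity of $s\mapsto\avg{F;V}_s$, only the bound $\avg{F;V}_s\leq \Eg_C[F;V]$ for every $s\in[0,1]$. That bound follows from second-order information alone. The paper proves it in three steps.
\begin{enumerate}
\item It applies the Brydges--Fr\"ohlich--Sokal generalisation of Newman's Gaussian inequality: $\avg{F;V}_s\leq\avg{F;V}^G_{S_s}$. The right-hand side is the Gaussian covariance whose covariance matrix is the interacting two-point function $S_s(x,y)=\avg{\varphi(x)\varphi(y)}_s$.
\item By the Griffiths inequalities and absolute monotonicity of $U$, $S_s$ is entrywise nonnegative and nonincreasing in $s$. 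Hence $0\leq S_s\leq S_0=C$ entrywise.
\item By Wick's formula, the Gaussian truncated correlation of two absolutely monotone functions is a sum of products of covariance entries with nonnegative coefficients. It is therefore nondecreasing in those entries, so $\avg{F;V}^G_{S_s}\leq\Eg_C[F;V]$.
\end{enumerate}
Integrating over $s\in[0,1]$ then gives \eqref{e:pert-gen}. This argument never asserts, and does not need, that $\avg{F;V}_s$ itself is monotone in $s$; that stronger claim is exactly the unproved part of your proposal.
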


\begin{proof}
Let
\begin{align}
\avg{F}_\lambda \propto \int e^{-\frac12 (\varphi,A\varphi)  - \lambda V(\varphi)}
F(\varphi)
\, d\varphi,
\end{align}
so that
\begin{equation}
\avg{F}_1
= \avg{F}_0 - \int_0^1 \avg{F;V}_\lambda \, d\lambda.
\end{equation}
By the Brydges--Fr\"ohlich--Sokal generalisation \cite[Proposition~5.1]{MR719815} of Newman's Gaussian inequalities,
applied to the measure with expectation $\avg{\cdot}_\lambda$,
\begin{equation}
\avg{F;V}_\lambda \leq \avg{F;V}^G_\lambda,
\label{e:BFS_gaussian_ineq}
\end{equation}
where the right-hand side refers to the Gaussian expectation with covariance $S_\lambda(x,y)= \avg{\varphi(x)\varphi(y)}_\lambda$.
It follows from the second Griffiths inequality and absolute monotonicity of $V$ that $S_\lambda$ is decreasing in $\lambda\geq 0$, pointwise in $x$ and $y$.
By Wick's formula, the right-hand side of~\eqref{e:BFS_gaussian_ineq} is also decreasing in $\lambda\geq 0$. 
The fact that $S_0=C$ concludes the proof.
\end{proof}

\begin{corollary}\label{cor:perturbation_inequality}
The following correlation inequalities holds for the sinh-Gordon model: for $x\in\Lambda_\epsilon$,
\begin{align}
    \label{e:pert-cosh}
    \frac{1}{2b}
    \avg{\varphi(0)\sinh(2b\varphi(x))}
    &\geq
    e^{2b^2C(0)} C(x)
    \nnb
    &- \frac{\mu_\epsilon}{4b^2} (e^{2b^2 C(0)})^2 C(x)\norm{\cosh(4b^2 C)-1}_1
    \nnb
    &-\frac{\mu_\epsilon}{4b^2} (e^{2b^2 C(0)})^2 (C\ast \sinh(4b^2 C))(x),
\end{align}
where $\|f\|_1 = \int_{\Lambda_\epsilon} |f(x)|\, dx$ and the matrix $A$ and the potential $V$ are defined as in \eqref{e:shG-measure-t}.
\end{corollary}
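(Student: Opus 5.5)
We apply the preceding Proposition with $F(\varphi) = \frac{1}{2b}\varphi(0)\sinh(2b\varphi(x))$ and with $V$ the cosh potential.

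\emph{Checking the hypotheses.} Here $V(\varphi)=\frac{\mu_\epsilon}{4b^2}\int\cosh(2b\varphi)$. This has the form $U(\frac12\varphi^2)$ with $U$ absolutely monotone, since $\cosh(2b\varphi(x)) = \sum_k (2b)^{2k}\varphi(x)^{2k}/(2k)!$ is a power series in $\frac12\varphi(x)^2$ with positive coefficients. The function $F$ is absolutely monotone, because $\varphi(0)\sinh(2b\varphi(x))$ has a power series with nonnegative coefficients. Its growth is exponential rather than subexponential. We handle this either by approximating $\sinh$ by its truncated Taylor polynomials, which are absolutely monotone with polynomial growth, and passing to the limit by monotone convergence, or by noting that the Gaussian factor together with the $\cosh$ potential makes everything integrable. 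The matrix $A = C^{-1}$ from \eqref{e:shG-measure-t} is ferromagnetic and invertible because of the mass $1/t$. The Proposition then gives $\avg{F}\geq \Eg_C[F]-\Eg_C[F;V]$, so it remains to compute these two Gaussian quantities.

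\emph{Main term.} By Gaussian integration by parts, $\Eg_C[\varphi(0)G(\varphi)] = \sum_y C(0,y)\Eg_C[\partial_y G]$. This gives
\[
\Eg_C[\varphi(0)\sinh(2b\varphi(x))] = 2b\, C(x)\, \Eg_C[\cosh(2b\varphi(x))] = 2b\, C(x)e^{2b^2C(0)}.
\]
Dividing by $2b$ yields the first term.

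\emph{Covariance term.} We write
\[
\Eg_C[F;V] = \frac{\mu_\epsilon}{4b^2}\int dy\, \Eg_C[F;\cosh(2b\varphi(y))].
\]
We expand $\cosh$ and $\sinh$ into exponentials $e^{\pm 2b\varphi}$. Integration by parts in $\varphi(0)$ distributes the derivative onto either factor, so
\[
\Eg_C[\varphi(0)\sinh(2b\varphi(x))\cosh(2b\varphi(y))] = 2b\,C(x)\Eg_C[\cosh\cosh] + 2b\,C(y)\Eg_C[\sinh\sinh].
\]
For the exponentials, $\Eg_C[e^{\alpha\varphi(x)+\beta\varphi(y)}]=e^{2b^2C(0)\cdot 2}e^{\alpha\beta C(x-y)}$ with $\alpha,\beta\in\{\pm 2b\}$. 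Hence
\[
\Eg_C[\cosh(2b\varphi(x))\cosh(2b\varphi(y))] = e^{4b^2C(0)}\cosh(4b^2C(x-y)),
\]
\[
\Eg_C[\sinh(2b\varphi(x))\sinh(2b\varphi(y))] = e^{4b^2C(0)}\sinh(4b^2C(x-y)).
\]
Subtracting the product of means, $\Eg_C[F]\,\Eg_C[\cosh(2b\varphi(y))] = C(x)e^{4b^2C(0)}$, we get
\[
\frac{1}{2b}\Eg_C[\varphi(0)\sinh(2b\varphi(x));\cosh(2b\varphi(y))] = e^{4b^2C(0)}\big[C(x)(\cosh(4b^2C(x-y))-1) + C(y)\sinh(4b^2C(x-y))\big].
\]
Integrating over $y$ gives two contributions. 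The first is $C(x)\int(\cosh(4b^2C)-1)$; this equals the $L^1$ norm because $C\geq0$ (ferromagnetic $A$ with positive mass), so the integrand is nonnegative. The second is $(C\ast\sinh(4b^2C))(x)$, using the symmetry $C(x-y)=C(y-x)$. Multiplying by $\frac{\mu_\epsilon}{4b^2}$ and subtracting from the main term gives \eqref{e:pert-cosh}.

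\emph{Main obstacle.} The algebra above is routine. The only delicate point is justifying the Proposition for $F$ of exponential rather than subexponential growth. We plan to use truncation: the inequality holds for each Taylor truncation $F_N$, and both sides converge as $N\to\infty$ by dominated convergence, since $\cosh$ moments are finite under both the Gaussian and the interacting measure.
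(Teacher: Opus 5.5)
Your proposal is correct and follows the paper's route: you apply the Gaussian-inequality Proposition with $F=\varphi(0)\sinh(2b\varphi(x))$ and the $\cosh$ potential, then compute $\Eg_C[F]$ and $\Eg_C[F;V]$ explicitly. The paper does this computation by Girsanov (completing the square) with unit coefficients and then rescales $\varphi\to 2b\varphi$, whereas you use Gaussian integration by parts directly; both give the same formulas. Your truncation argument for the exponential growth of $F$, a point the paper passes over, works provided you rely on dominated rather than monotone convergence, since the Taylor terms $\varphi(0)\varphi(x)^{2k+1}$ change sign.
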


\begin{proof}
To apply the general correlation bound \eqref{e:pert-gen} to the sinh-Gordon model
and derive \eqref{e:pert-cosh}, define
\begin{equation}
F(\varphi) = \varphi(0) \sinh(\varphi(x)),
\qquad
V(\varphi)=\sum_y \cosh(\varphi(y)).
\end{equation}
Then $F$ is absolutely monotone and $V(\varphi) = U(\frac12 \varphi^2)$ with $U$ absolutely monotone, and
we need the following Gaussian computation.
For a centered Gaussian expectation $\Eg_C$ with covariance $C(x,y)$,
\begin{equation}
    \label{e:pert-gaussian1}
    \Eg_C[F]
    = e^{\frac12 C(x,x)}C(0,x)
\end{equation}
and
\begin{equation} \label{e:pert-gaussian2}
\Eg_C[F;V] = \sum_y e^{\frac12 C(x,x)+\frac12 C(y,y)} \qa{C(0,x)(\cosh(C(x,y))-1) + C(0,y)\sinh(C(x,y))}.
\end{equation}
The claimed estimate follows by applying the general perturbation inequality \eqref{e:pert-gen}
with $\varphi$ replaced by $2b\varphi$ (which has covariance $C$ replaced by $4b^2 C$)
and $V$ replaced by $\frac{\mu_\epsilon}{4b^2} V$ (which simply multiplies the last line by the prefactor).

To complete the proof, we verify the Gaussian identities
\eqref{e:pert-gaussian1} and \eqref{e:pert-gaussian2}.
Using that $\varphi$ is centered, by Girsanov's formula (completion of the square),
\begin{equation}
  \Eg_C[F] = \Eg_C[\varphi(0)\sinh(\varphi(x))] = \Eg_C[{\varphi(0)e^{\varphi(x)}}] = e^{\frac12 C(x,x)} C(0,x),
\end{equation}
which is the first formula \eqref{e:pert-gaussian1}. For the second formula, also observe
\begin{equation}
  \Eg_C[{\cosh(\varphi(y))}] = \Eg_C[{e^{\varphi(y)}}] = e^{\frac12 C(y,y)},
\end{equation}
and that, using Girsanov's formula, 
\begin{equation}
\Eg_C[{\varphi(0)\sinh(\varphi(x))\cosh(\varphi(y))}]
=e^{\frac12 C(x,x)+\frac12 C(y,y)} \qa{C(0,x)\cosh(C(x,y)) + C(0,y)\sinh(C(x,y))}.
\end{equation}
Combining these formulas gives \eqref{e:pert-gaussian2}.
For the last formula, we also record the detailed computation:
\begin{align}
&\Eg_C[{\varphi(0)\sinh(\varphi(x))\cosh(\varphi(y))}]
\nnb
&
= \frac14 \qa{\Eg_C\qa{\varphi(0)\pa{
    e^{\varphi(x)+\varphi(y)}
    +e^{\varphi(x)-\varphi(y)}
    -e^{-\varphi(x)+\varphi(y)}
    -e^{-\varphi(x)-\varphi(y)}
}}}
\nnb
&
= \frac12 \Eg_C\qa{{\varphi(0)\pa{
    e^{\varphi(x)+\varphi(y)}
    +e^{\varphi(x)-\varphi(y)}
}}}
.
\end{align}
By Girsanov, it follows that this is equal to
\begin{align}
&
\frac12
   e^{\frac12 C(x,x)+\frac12 C(y,y)+C(x,y)}(C(0,x)+C(0,y))
\nnb
    &\qquad\qquad
    + \frac12 e^{\frac12 C(x,x)+\frac12 C(y,y)-C(x,y)}(C(0,x)-C(0,y))
    \nnb
    &=
   e^{\frac12 C(x,x)+\frac12 C(y,y)} \qa{C(0,x)\cosh(C(x,y))+C(0,y)\sinh(C(x,y))}
\end{align}
as claimed.
\end{proof}

\subsection{Susceptibility bound}
\label{sec:suscept}

Throughout this section, fix $t>0$,
\begin{equation} \label{e:CS-convention}
S_t(x) = \avg{\varphi(0)\varphi(x)}_t, \qquad C_t(x)=( -\frac{1}{2\pi} \Delta_\epsilon +\frac{1}{t})^{-1}(0,x),
\end{equation}
and the measure~\eqref{e:shG-measure-t} is denoted by $\avg{\cdot}_t$.
The estimates from Section~\ref{sec:pert},
specifically Corollary~\ref{cor:perturbation_inequality},
will be applied with $A^{-1}=C_t$ in \eqref{e:measure-pert}.

\begin{proposition}
The Schwinger--Dyson equation for the regularised sinh-Gordon model is
\begin{equation} \label{e:SD}
    S_t(x)-C_t(x)
    =
    -\frac{\mu_\epsilon}{2b} \int_{\Lambda_\epsilon} C_t(x-z)\avg{\varphi(0)\sinh(2b\varphi(z))}_t \, dz
    .
\end{equation}
\end{proposition}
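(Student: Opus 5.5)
We prove \eqref{e:SD} by Gaussian integration by parts, i.e.\ the Schwinger--Dyson identity for a Gaussian reference measure perturbed by a smooth potential. Write the measure $\avg{\cdot}_t$ as
\begin{equation*}
\avg{F}_t \propto \int F(\varphi)\, e^{-\frac12 (\varphi, C_t^{-1}\varphi) - V(\varphi)}\, d\varphi,
\qquad
V(\varphi) = \frac{\mu_\epsilon}{4b^2}\int_{\Lambda_\epsilon}\cosh(2b\varphi(z))\,dz.
\end{equation*}
Here $C_t^{-1} = -\frac{1}{2\pi}\Delta_\epsilon + \frac1t$ acts as an operator on $L^2(\Lambda_\epsilon)$, so that $\frac12(\varphi,C_t^{-1}\varphi)$ reproduces the quadratic part of \eqref{e:shG-measure-t}. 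The identity then comes from integrating by parts once in a single coordinate.

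\textbf{Step 1: the integration by parts identity.} For each $y\in\Lambda_\epsilon$ we have
\begin{equation*}
\int \partial_{\varphi(y)}\qa{\varphi(0)\, e^{-\frac12(\varphi,C_t^{-1}\varphi)-V(\varphi)}}\, d\varphi = 0 .
\end{equation*}
The boundary terms vanish because the density decays like a Gaussian, and $\cosh$ only improves this decay. With $L^2$-gradients, $\epsilon^{-d}\partial_{\varphi(y)}(\varphi,C_t^{-1}\varphi)/2 = (C_t^{-1}\varphi)(y)$ and $\epsilon^{-d}\partial_{\varphi(y)}V = \frac{\mu_\epsilon}{2b}\sinh(2b\varphi(y))$. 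Also $\epsilon^{-d}\partial_{\varphi(y)}\varphi(0) = \epsilon^{-d}1_{y=0} =: \delta_0(y)$, which is the $L^2(\Lambda_\epsilon)$ delta function. Dividing by the normalisation therefore gives
\begin{equation*}
\avg{\varphi(0)\,(C_t^{-1}\varphi)(y)}_t = \delta_0(y) - \frac{\mu_\epsilon}{2b}\avg{\varphi(0)\sinh(2b\varphi(y))}_t .
\end{equation*}

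\textbf{Step 2: invert $C_t^{-1}$.} The left-hand side equals $(C_t^{-1}S_t)(y)$. This uses that $S_t(y) = \avg{\varphi(0)\varphi(y)}_t$ and that the operator acts in the $y$ variable. Now apply $C_t$ as the $L^2(\Lambda_\epsilon)$ convolution operator $\int_{\Lambda_\epsilon} C_t(x-y)\,\cdot\,dy$. Translation invariance on the torus makes $C_t(x,y)=C_t(x-y)$, and evaluating at $x$ gives
\begin{equation*}
S_t(x) = \int C_t(x-y)\delta_0(y)\,dy - \frac{\mu_\epsilon}{2b}\int_{\Lambda_\epsilon} C_t(x-z)\avg{\varphi(0)\sinh(2b\varphi(z))}_t\,dz .
\end{equation*}
Since $\int C_t(x-y)\delta_0(y)\,dy = C_t(x)$, this is exactly \eqref{e:SD}.

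\textbf{Main obstacle.} The only real care needed is in normalisation. We must check that the $\epsilon^{-d}$ factors from the $L^2$-gradient match the convention $\int_{\Lambda_\epsilon}\cdot\,dx = \epsilon^d\sum$. We must also check that $C_t$ as defined in \eqref{e:CS-convention} is the kernel of the inverse operator in $L^2(\Lambda_\epsilon)$, which is what makes the discrete delta $\epsilon^{-d}1_{y=0}$ produce exactly $C_t(x)$. Finally, the factor $\frac{\mu_\epsilon}{4b^2}\cdot 2b = \frac{\mu_\epsilon}{2b}$ has to come out as stated. Integrability is not an issue, since everything is a finite-dimensional integral with Gaussian decay.
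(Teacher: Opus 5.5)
Your proof is correct and is essentially the paper's argument. The paper applies the Gaussian integration by parts formula $\Eg_C[\varphi(0)G]=\sum_y C(0,y)\Eg_C[\partial_{\varphi(y)}G]$ with $G=\varphi(x)e^{-V}$, which is the same as your Lebesgue integration by parts followed by applying $C_t$; the only difference is that you convolve in the variable paired with $\varphi(x)$, so you land directly on $C_t(x-z)\avg{\varphi(0)\sinh(2b\varphi(z))}_t$ without invoking translation invariance of $\avg{\cdot}_t$ as the paper does, and your normalisation bookkeeping ($\epsilon^{-d}$ factors, the $L^2$ kernel of $C_t$, $\frac{\mu_\epsilon}{4b^2}\cdot 2b=\frac{\mu_\epsilon}{2b}$) checks out.
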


\begin{proof}
The Schwinger--Dyson equation follows by Gaussian integration by parts:
\begin{align}
    \Eg_C [\varphi(0)\varphi(x) e^{-V}]
    &= \sum_y C (0,y) \Eg_C [\ddp{}{\varphi(y)}(\varphi(x) e^{-V})]
    \nnb
    &= C (0,x) \Eg_C[e^{-V}] - \sum_y C (0,y) \Eg_C[\varphi(0) \ddp{V}{\varphi(y-x)} e^{-V}]
    ,
\end{align}
using translation invariance. Passing to continuum notation gives the claim.
\end{proof}

\begin{lemma} \label{lem:cov}
The following estimates hold uniformly in $\epsilon \in (0,1]$. For any $t\geq \epsilon^2$,
\begin{equation} \label{e:Ct-L1Linfty}
    \|C_t\|_1 = t, \qquad 
    \|C_t*C_t\|_1 = \|C_t\|_1^2 = t^2,
    \qquad
    \|C_t*C_t\|_\infty \lesssim t,
\end{equation}
and there is a constant $c>0$ such that
\begin{equation} \label{e:expCt}
\epsilon^{2b^2} e^{2b^2 C_t(0)}
= (ct)^{b^2} (1+O(\epsilon^\alpha))
.
\end{equation}
Moreover, if in addition $b^2 < 1/2-\delta$ for some $\delta\in(0,1/2)$,
\begin{equation}
    \|\cosh(4b^2 C_t)-1\|_1 \lesssim t b^2,\qquad
    \|\sinh(4b^2 C_t)\|_1 \lesssim t b^2.
\end{equation}
\end{lemma}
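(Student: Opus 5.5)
The plan is to work in Fourier space on the torus $\Lambda_\epsilon$ and use the explicit symbol of $C_t$. Write $\hat C_t(p) = (\frac{1}{2\pi}\lambda_\epsilon(p) + 1/t)^{-1}$, where $\lambda_\epsilon(p) = \epsilon^{-2}\sum_{j}(2-2\cos(\epsilon p_j))$ is the symbol of $-\Delta_\epsilon$. Since $C_t$ is the inverse of an M-matrix (ferromagnetic off-diagonal entries plus a positive diagonal), it is pointwise positive. Hence $\|C_t\|_1 = \hat C_t(0) = t$. The same positivity applied to $C_t * C_t$ gives $\|C_t*C_t\|_1 = \hat C_t(0)^2 = t^2$.

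For the sup norm, positivity of the Fourier transform gives $\|C_t*C_t\|_\infty = (C_t*C_t)(0) = L^{-2}\sum_p \hat C_t(p)^2$. This is a Riemann sum over the dual torus restricted to the Brillouin zone $[-\pi/\epsilon,\pi/\epsilon]^2$. The Riemann sum is bounded by an integral (the summand is monotone in $|p|$ up to constants, plus the single $p=0$ term $t^2/L^2 \le t$ when $t\lesssim L^2$; more generally I would bound the sum by comparison uniformly in $L\geq1$). Using $\lambda_\epsilon(p) \gtrsim |p|^2$ on the zone, the integral is
\begin{equation*}
\int_{\R^2} \frac{dp}{(c|p|^2+1/t)^2} \lesssim t .
\end{equation*}

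For \eqref{e:expCt}, I will write $C_t(0) = L^{-2}\sum_p \hat C_t(p)$ and compare it with the lattice Green's function asymptotics. The target is
\begin{equation*}
C_t(0) = \tfrac{1}{2}\log(t/\epsilon^2) + \kappa + O\bigl((\epsilon^2/t)^{\alpha'}\bigr)
\end{equation*}
with a lattice constant $\kappa$. The normalisation $\frac{1}{2\pi}$ makes $\frac{2\pi}{(2\pi)^2}\int \frac{dp}{|p|^2}$ equal $\frac{1}{2}\log$ per scale ratio squared. Concretely, I will split the integrand as $\frac{1}{2\pi}\lambda_\epsilon+1/t$ versus $\frac{1}{2\pi}\lambda_\epsilon$ plus a continuum-mass correction. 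Both pieces are evaluated by rescaling $p\to p/\epsilon$, which produces $\frac{1}{2}\log(t/\epsilon^2)$ plus a constant and an error vanishing as $\epsilon^2/t\to0$. Exponentiating with factor $2b^2$ then gives $\epsilon^{2b^2}e^{2b^2C_t(0)} = (ct)^{b^2}(1+O(\cdot))$. Since the statement's error is $O(\epsilon^\alpha)$ while $t\ge\epsilon^2$ only, I would interpret the bound as holding with the relevant error in $\epsilon^2/t$. Alternatively, I would note that finite-volume corrections, of relative size $e^{-cL/\sqrt t}$, are absorbed since $t$ is fixed. Care with the Riemann-sum-versus-integral error at finite $L$ is routine.

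The main step is the bounds on $\cosh(4b^2C_t)-1$ and $\sinh(4b^2C_t)$, and I expect it to be the main obstacle. I need pointwise bounds on $C_t(x)$ of two kinds. The first is logarithmic at short distance:
\begin{equation*}
C_t(x) \le \tfrac12\log\bigl(t/(|x|^2\vee\epsilon^2)\bigr) + K \quad\text{for } |x|\lesssim\sqrt t .
\end{equation*}
The second is exponential decay, $C_t(x)\lesssim e^{-c|x|/\sqrt t}$ for $|x|\ge\sqrt t$. I would derive both from a random walk representation or a heat-kernel integral $C_t = \int_0^\infty e^{-s/t}e^{s\Delta_\epsilon/2\pi}\,ds$ with standard discrete heat kernel bounds. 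Then, using $\cosh u-1\le u\sinh u$, $\sinh u\le u e^{u}$, and (for the $\cosh$ term) $\cosh u -1\le u^2\cosh u$, I split the domain. For $|x|\le\sqrt t$ the integrand is at most $C\, e^{4b^2C_t(x)} \lesssim (t/|x|^2)^{2b^2}$, and the integral of this over $|x|\le \sqrt t$ is
\begin{equation*}
\lesssim \frac{t}{1-2b^2}\lesssim_\delta t ,
\end{equation*}
which is exactly where $b^2<1/2-\delta$ enters. For the factor $b^2$, I use $\sinh u\le u e^u$ with $u=4b^2C_t$, which gives integrand $\lesssim b^2 C_t(x)(t/|x|^2)^{2b^2}$. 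The logarithm is harmless since $\int_{|x|\le\sqrt t}\log(t/|x|^2)(t/|x|^2)^{2b^2}dx\lesssim_\delta t$. For $|x|>\sqrt t$, $C_t$ is bounded, so $\sinh(4b^2C_t)\lesssim b^2C_t$ and $\cosh(4b^2C_t)-1\lesssim b^2 C_t^2\lesssim b^2C_t$. Integrating these against $\|C_t\|_1=t$ gives $\lesssim tb^2$. Uniformity in $\epsilon$ holds because the $\epsilon$-cutoff only improves the short-distance bound.
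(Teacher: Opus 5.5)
Your proposal is correct and is essentially the paper's proof. The shared steps are positivity plus $\hat C_t(0)=t$ for the $L^1$ identities, Parseval with $\lambda_\epsilon(p)\gtrsim|p|^2$ for $\|C_t*C_t\|_\infty$, and the Fourier/heat-kernel representation for $C_t(0)$. For the last two bounds, both use heat-kernel pointwise estimates on $C_t$ combined with $\sinh u\le ue^u$, $\cosh u-1\lesssim u^2\cosh u$ and integrability of $|x|^{-4b^2}$ for $b^2<1/2$; your use of $\|C_t\|_1=t$ on $\{|x|>\sqrt t\}$ is a harmless shortcut.

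The one thing to drop is your fallback of controlling the $p=0$ term ``uniformly in $L\geq1$''. Since $\|C_t*C_t\|_\infty\geq L^{-2}\|C_t*C_t\|_1=t^2/L^2$ and $C_t(0)\geq t/L^2$, the lemma genuinely requires $t\lesssim L^2$. The paper also assumes this implicitly, e.g.\ $t\lesssim 1\leq L$ in its estimate of $C_t(0)$. You are right that the error in \eqref{e:expCt} is really in powers of $\epsilon^2/t$.
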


\begin{proof}
    The proof follows from 
    standard estimates for the Green's function on the lattice and
    the bounds $\cosh(x)-1 \leq \frac12 x^2 \cosh(x)$ and $\sinh(x) \leq x\cosh(x)$ for $x\in\R$, 
    which imply that
    \begin{align}
        \|\cosh(4b^2 C_t)-1\|_1
        &\leq \frac12 b^4 \int_{\Lambda_\epsilon} dx \; C_t(x)^2 \cosh(4b^2 C_t(x))
        \lesssim b^4 t ,
        \\
        \|\sinh(4b^2 C_t)\|_1
        &\leq b^2 \int_{\Lambda_\epsilon} dx \; C_t(x) \cosh(4b^2 C_t(x))
        \lesssim b^2 t.
    \end{align}
    Details are given in Appendix~\ref{app:cov}.
\end{proof}

\begin{proof}[Proof of Proposition~\ref{prop:suscept}]
    Define the remainder
    \begin{equation}
        E_t(x) = C_t(x)-S_t(x) \geq 0.
    \end{equation}
    This remainder is nonnegative by the second Griffiths inequality. We need to show the lower bound
    \begin{equation}
        \|E_t\|_1 \geq c\mu t^{2+b^2}-C\mu^2 t^{3+2b^2}.
    \end{equation}
    Indeed, then by positivity of $S_t, C_t, E_t$,
    \begin{equation}
        \chi_t = \|S_t\|_1 = \|C_t\|_1 - \|E_t\|_1 \leq t - c\mu t^{2+b^2} + C\mu^2 t^{3+2b^2}.
    \end{equation}
    By the Schwinger--Dyson equation,
    \begin{equation}
        E_t(x) = \frac{\mu_\epsilon}{2b} \int_{\Lambda_\epsilon} C_t(x-z) \avg{\varphi(0)\sinh(2b\varphi(z))}_t \,dz
        .
    \end{equation}
    The perturbation inequality of Corollary~\ref{cor:perturbation_inequality} implies the lower bound
    \begin{align}
        E_t(x) 
        &\geq \mu_{\epsilon,t}
        \int_{\Lambda_\epsilon} C_t(x-z)C_t(z) \,dz
        \nnb
        &\qquad
        - \frac{\mu_{\epsilon,t}^2}{4b^2} \int_{\Lambda_\epsilon} C_t(x-z)C_t(z) \, dz
        \int_{\Lambda_\epsilon} (\cosh(4b^2 (C_t(y)))-1) \, dy
        \nnb
        &\qquad
        - \frac{\mu_{\epsilon,t}^2}{4b^2} \int_{\Lambda_\epsilon} C_t(x-z) \, dz
        \int_{\Lambda_\epsilon} C_t(z-y) \sinh(4b^2(C_t(y))) \, dy,
    \end{align}
    where $\mu_{\epsilon,t} = \mu_\epsilon e^{2b^2 C_t(0)} \approx \mu t^{b^2}$ by \eqref{e:expCt}.
    Using the estimates from the previous lemma, therefore
    \begin{align}
        \|E_t\|_1
        &\geq \mu_{\epsilon,t}  t^2
        - C\mu_{\epsilon,t}^2 t^3
        \geq c\mu t^{2+b^2}- C\mu^2 t^{3+2b^2}
        .
    \end{align}
    This is the required estimate.
\end{proof}

\section{Ding--Song--Sun inequality for potentials in the GHS class}
\label{sec:DSS}

The journal submission of this paper will contain the proof of the generalised Ding--Song--Sun inequality in this section.
Since the proof was already posted as the arXiv preprint  \cite{2609.08980},
direct inclusion in the arXiv version of this paper is not possible as per arXiv rules.
Therefore we refer for the proof to the preprint \cite{2609.08980} until both papers are merged.
The preprint will not be published separately.

The main conclusion of the DSS inequality
needed in this paper is that sinh-Gordon measures with ferromagnetic coupling matrices satisfy
\begin{equation}
\label{eq: 2 point DSS estimate}
  \forall g\in\R^\Lambda,
  \forall x,y\in \Lambda, \qquad
  \langle \sigma_x;\sigma_y\rangle_g
  \leq \langle \sigma_x;\sigma_y\rangle_0,
\end{equation}
where $\langle \cdot;\cdot\rangle_u$ denotes the covariance with respect to the sine-Gordon measure with external field $u$:
\begin{equation}
\mu^{u}(d\sigma)
\propto
\exp\bigg[-\frac{1}{2}(\sigma,A\sigma) + (\sigma,u) - \sum_{x\in\Lambda}V(\sigma_x)\bigg]
\, d\sigma
,\qquad (u,v) = \sum_{x\in\Lambda} u_xv_x
,
\end{equation}
where $A$ is any ferromagnetic matrix, i.e., $A_{xy} \leq 0$ for $x\neq y$.

\appendix
\section{Green's function estimates}
\label{app:cov}

In this appendix, we prove Lemma~\ref{lem:cov}, namely that for any $t\geq \epsilon^2$:
\begin{equation} \label{e:Green1-app}
    \|C_t\|_1 = t, \qquad 
    \|C_t*C_t\|_1 = \|C_t\|_1^2 = t^2,
    \qquad
    \|C_t*C_t\|_\infty \lesssim t;
\end{equation}
that there is a constant $c>0$ such that
\begin{equation} \label{e:Green2-app}
\epsilon^{2b^2} e^{2b^2 C_t(0)} 
= (ct)^{b^2} (1+O(\epsilon^\alpha))
;
\end{equation}
and that, if in addition $b^2 < 1/2-\delta$, any $\delta>0$,
\begin{equation} \label{e:Green3-app}
    \|\cosh(4b^2 C_t)-1\|_1 \lesssim t,\qquad
    \|\sinh(4b^2 C_t)\|_1 \lesssim t.
\end{equation}

The estimates are direct calculations if we replace the lattice Green's function $C_t$ on the torus
by the explicit continuum Green's function $\bar C_t(x) = (-\frac{1}{2\pi}\Delta+1/t)^{-1}(0,x)$ on $\R^2$.
Indeed, then
\begin{equation}
    \bar C_t(x) = \bar C_1(x/ \sqrt{t} ), \quad \int \bar C_1(x) \, dx =1,
\end{equation}
and there is a continuous function $g$ such that
\begin{equation}
    \bar C_1(x) = \log \frac{1}{|x|} + g(|x|), \qquad \bar C_1(x) \lesssim e^{-|x|} \quad (|x|>1),
\end{equation}
and all estimates with $C_t$ replaced by $\bar C_t$ are straightforward from this.


\begin{proof}[Proof of~\eqref{e:Green1-app}]
    The exact identity $\|C_t\|_1=t$ follows from
    \begin{equation}
        \|C_t\|_1 = (1_{\epsilon,L},(-\frac{1}{2\pi}\Delta+\frac{1}{t})^{-1}1_{\epsilon,L}) = t (1_{\epsilon,L},1_{\epsilon,L}) = t.
    \end{equation}
    where $1_{\epsilon,L}$ is the $L^2$ normalised constant function on $\Lambda_{\epsilon}=\Lambda_{\epsilon,L}$. 
    For the last estimate, observe
    \begin{equation}
        \|C_t*C_t\|_\infty = \sup_{x} \int C_t(x-y)C_t(y) \, dy
        \leq \int C_t(y)^2 \, dy,
    \end{equation}
    by Cauchy-Schwarz. 
    Using the Parseval identity the right-hand side equals
    \begin{equation}
       \int_{\Lambda^*} \frac{t^2}{(-t \frac{1}{2\pi}\hat\Delta_\epsilon(p) + 1)^2}  \frac{dp}{(2\pi)^2}
       = t\int_{\Lambda^*} \frac{1}{(-t\hat\Delta_\epsilon(p) + 2\pi)^2}  \, t \, dp
       \lesssim t,
    \end{equation}
    where $\Lambda^* = \{p \in \frac{2\pi}{L} \Z^2: -\pi/\epsilon < p_i \leq \pi/\epsilon\}$ is the
    dual torus and
    \begin{equation}
    \hat \Delta_\epsilon(p) 
    = 
    \epsilon^{-2} \sum_{i=1}^2 (2\cos(\epsilon p_i)-2)
    =
    -4\epsilon^{-2} \sum_{i=1}^2 \sin^2(\epsilon p_i/2)
	.    
    \end{equation}
    The convexity bound $\sin^2(x)\geq 4x^2/\pi^2$ valid for all $x\in[-\pi/2,\pi/2]$ shows that the above integral is bounded uniformly in $t,\epsilon$:
    \begin{equation}
    \int_{\Lambda^*} \frac{t\, dp}{(-t\hat\Delta_\epsilon(p) + 2\pi)^2} 
    \lesssim 
    \int_{\Lambda^*} \frac{t\, dp}{(t|p|^2+1)^2} 
    \lesssim 
    \int_{\R^2}\frac{dq}{(|q|^2+1)^2}
    <
    \infty
    .
    \end{equation}
    This completes the proof of \eqref{e:Green1-app}.
\end{proof}

\begin{proof}[Proof of~\eqref{e:Green2-app}]
    For \eqref{e:Green2-app}, we observe the standard estimate, for $\epsilon^2 \leq t \lesssim 1$ and $L \geq 1$:
    \begin{equation}
        C_t(0)
        = \log \frac{\epsilon^2}{t}  + O(1).
    \end{equation}
    which again follows, for example, from the Fourier representation
    \begin{equation}
        \int_{\Lambda^*} \frac{t}{-t \frac{1}{2\pi}\hat \Delta_\epsilon(p) + 1} \frac{dp}{(2\pi)^2},
    \end{equation}
    or alternatively from heat-kernel estimates as in \cite[Proof of Lemma~3.3]{MR4303014}. 
\end{proof}

\begin{proof}[Proof of~\eqref{e:Green3-app}]
    It suffices to show that
    \begin{equation} \label{e:Ct-upper}
        e^{4b^2 C_t(x)} \lesssim (1+\frac{\sqrt{t}}{|x|})^{4b^2},\qquad
        C_t(x) \lesssim
        \pa{1+ \log_+ \frac{\sqrt{t}}{|x|}}e^{-c|x|/\sqrt{t}}.
    \end{equation}
    Using $\cosh(x)-1 \leq \frac{x^2}{2} \cosh(x)$ and $\sinh(x) \leq x \cosh(x)$,
    it then follows that
    \begin{align}
        \|\cosh(4b^2 C_t)-1\|_1
        \leq \frac12 b^4 \int C_t(x)^2 \cosh(4b^2 C_t(x))
        \lesssim b^4 t,
        \\
        \|\sinh(4b^2 C_t)\|_1
        \leq b^2 \int C_t(x) \cosh(4b^2 C_t(x))
        \lesssim b^2 t.
    \end{align}

    For the first bound in \eqref{e:Ct-upper} we use that, uniformly
     $|x| \geq \epsilon$, $t \geq \epsilon^2$, and $L \geq 1$:
    \begin{equation} \label{e:barCC}
        |\bar C_t(x)-C_t(x)| \lesssim 1.
    \end{equation}
    Together with the explicit expression for $\bar C_t$ this implies the first bound in \eqref{e:Ct-upper}:
     \begin{equation}
        e^{4b^2 C_t(x)} \lesssim e^{4b^2 \bar C_t(x)} \lesssim (1+\frac{\sqrt{t}}{|x|})^{4b^2}
        .
    \end{equation}
    To see \eqref{e:barCC}, one can for example use \cite[Theorem~2.1.3]{MR2677157} which compares the lattice and continuum heat kernels
    and that
    \begin{equation}
        C_t(x) = \int_0^\infty e^{-t/s} p_s^{L,\epsilon}(x)\, ds, \qquad \bar C_t(x) = \int_0^\infty e^{-t/s} p_s(x) \, ds.
    \end{equation}
    The reference shows $|p_s^{\epsilon}(x) - p_s(x)| \lesssim \epsilon^2/s^2$ which handles the contribution $s \geq \epsilon^2$.
    Using $p_s^\epsilon(x) \leq \epsilon^{-2}$ for $s \leq \epsilon^2$ and 
    the exponential decay $p_s^\epsilon(x) \lesssim (1/s)e^{-c|x|/\sqrt{s}}$,
    see for example \cite[Lemma~A.1]{MR4303014},    
    to handle the torus periodisation can be used to complete the estimate.

    Similarly, for the second bound,
    \begin{equation}
        C_t(x) = \int e^{-s/t} p_s^{L,\epsilon}(x) \, ds
        \lesssim  \int_0^\infty e^{-s/t} \pa{
            \frac{e^{-c|x|/\sqrt{s}}}{s} + \frac{e^{-cL/\sqrt{s}}}{L^2}
        } \, ds.
    \end{equation}
    For the $L$-independent part,
    the contribution to the integral from $s \leq t$ is bounded by
    \begin{equation}
     \int_0^{t} \frac{e^{-c|x|/\sqrt{s}}}{s} \, ds
     = \int_0^{t/|x|^2} \frac{e^{-c/\sqrt{s}}}{s} \, ds
     = \int_{|x|^2/t}^\infty \frac{e^{-c\sqrt{s}}}{s} \, ds
     \lesssim (1+\log_+ \frac{t}{|x|^2})e^{-c|x|/\sqrt{t}}.
    \end{equation}
    whereas the contribution from $s \geq t$ is less than
    \begin{equation}
     \int_{t}^{\infty} e^{-c|x|/\sqrt{s}}\frac{e^{-s/t}}{s} \, ds
     = \int_{1}^{\infty} e^{-c|x|/\sqrt{st}} \frac{e^{-s}}{s} \, ds
     \lesssim e^{-c|x|/\sqrt{t}}.
    \end{equation}
    The $L$-dependent part is bounded analogously.
    This completes the proof.
\end{proof}

\section*{Acknowledgements}

We thank Michael Hofstetter, Chuck Newman, and Ofer Zeitouni for various related discussions.

This work was supported in part by NSF grant DMS-2348045 and the Simons Collaboration
grant on Probabilistic Paths to Quantum Field Theory.

\section*{AI statement}

No AI assistance was used.

\bibliography{all}

@Article{MR719815,
    author = {Brydges, D.C. and Fr{\"o}hlich, J. and Sokal, A.D.},
    title = "The random-walk representation of classical spin systems and correlation inequalities. {II}. {T}he skeleton inequalities",
    journal = "Commun. Math. Phys.",
    year = "1983",
    volume = "91",
    number = "1",
    pages = "117--139",
    url = "https://projecteuclid.org/getRecord?id=euclid.cmp/1103940478"
}

@Book{MR3155209,
    author = "Bakry, D. and Gentil, I. and Ledoux, M.",
    title = "Analysis and geometry of {M}arkov diffusion operators",
    publisher = "Springer, Cham",
    year = "2014",
    volume = "348",
    series = "Grundlehren der Mathematischen Wissenschaften",
    isbn = "978-3-319-00226-2; 978-3-319-00227-9",
    doi = "10.1007/978-3-319-00227-9",
    pages = "xx+552",
    url = "https://doi.org/10.1007/978-3-319-00227-9"
}

@Book{MR887102,
    author = "Glimm, J. and Jaffe, A.",
    publisher = "Springer-Verlag",
    title = "Quantum physics",
    year = "1987",
    edition = "Second",
    isbn = "0-387-96476-2",
    note = "A functional integral point of view",
    pages = "xxii+535",
}

@Book{MR2677157,
    author = "Lawler, G.F. and Limic, V.",
    publisher = "Cambridge University Press",
    title = "Random walk: a modern introduction",
    year = "2010",
    isbn = "978-0-521-51918-2",
    series = "Cambridge Studies in Advanced Mathematics",
    volume = "123",
    pages = "xii+364",
}

@Book{MR0489552,
    author = "Simon, B.",
    publisher = "Princeton University Press",
    title = "The {$P(\phi )_{2}$} {E}uclidean (quantum) field theory",
    year = "1974",
    note = "Princeton Series in Physics",
    pages = "xx+392",
}

@Article{MR3274356,
    author = "Rhodes, R. and Vargas, V.",
    journal = "Probab. Surv.",
    title = "Gaussian multiplicative chaos and applications: a review",
    year = "2014",
    pages = "315--392",
    volume = "11",
    doi = "10.1214/13-PS218",
    url = "https://doi.org/10.1214/13-PS218"
}

@article{MR395659,
    AUTHOR = "Ellis, R.S. and Monroe, J.L. and Newman, C.M.",
    TITLE = "The {GHS} and other correlation inequalities for a class of even ferromagnets",
    JOURNAL = "Commun. Math. Phys.",
    VOLUME = "46",
    YEAR = "1976",
    NUMBER = "2",
    PAGES = "167--182",
    URL = "http://projecteuclid.org/euclid.cmp/1103899588"
}

@article{MR4303014,
    AUTHOR = "Bauerschmidt, R. and Bodineau, T.",
    TITLE = "Log-{S}obolev inequality for the continuum sine-{G}ordon model",
    JOURNAL = "Comm. Pure Appl. Math.",
    VOLUME = "74",
    YEAR = "2021",
    NUMBER = "10",
    PAGES = "2064--2113",
    DOI = "10.1002/cpa.21926",
    URL = "https://doi.org/10.1002/cpa.21926",
    eprint = "1907.12308",
}

@article{MR4586225,
    AUTHOR = "Ding, J. and Song, J. and Sun, R.",
    TITLE = "A new correlation inequality for {I}sing models with external fields",
    JOURNAL = "Probab. Theory Related Fields",
    VOLUME = "186",
    YEAR = "2023",
    NUMBER = "1-2",
    PAGES = "477--492",
    DOI = "10.1007/s00440-022-01132-1",
    URL = "https://doi.org/10.1007/s00440-022-01132-1"
}

@Article{MR4705299,
    author = "Bauerschmidt, R. and Dagallier, B.",
    journal = "Comm. Pure Appl. Math.",
    title = "Log-{S}obolev inequality for near critical {I}sing models",
    year = "2024",
    number = "4",
    pages = "2568--2576",
    volume = "77",
    eprint = "2202.02301",
}

@Article{MR4720217,
    author = "Bauerschmidt, R. and Dagallier, B.",
    journal = "Comm. Pure Appl. Math.",
    title = "Log-{S}obolev inequality for the {$\varphi^4_2$} and {$\varphi^4_3$} measures",
    year = "2024",
    number = "5",
    pages = "2579--2612",
    volume = "77",
    eprint = "2202.02295",
}

@Article{MR4798104,
    author = "Bauerschmidt, R. and Bodineau, T. and Dagallier, B.",
    journal = "Probab. Surv.",
    title = "Stochastic dynamics and the {P}olchinski equation: {A}n introduction",
    year = "2024",
    pages = "200--290",
    volume = "21",
    doi = "10.1214/24-ps27",
    url = "https://doi.org/10.1214/24-ps27",
    eprint = "2307.07619"
}

@Article{2504.08606,
    author = "Bauerschmidt, R. and Dagallier, B. and Weber, H.",
    title = "{H}olley--{S}troock uniqueness method for the {$\varphi^4_2$} dynamics",
    archiveprefix = "arXiv",
    eprint = "2504.08606",
    publisher = "arXiv",
    note = "Preprint, arXiv:2504.08606"
}

@article{MR4258290,
    AUTHOR = "Konik, R. and L\'ajer, M. and Mussardo, G.",
    TITLE = "Approaching the self-dual point of the sinh-{G}ordon model",
    JOURNAL = "J. High Energy Phys.",
    YEAR = "2021",
    NUMBER = "1",
    PAGES = "Paper No. 014, 82",
    DOI = "10.1007/jhep01(2021)014",
    URL = "https://doi.org/10.1007/jhep01(2021)014"
}

@article{MR4430201,
    AUTHOR = "Bernard, D. and LeClair, A.",
    TITLE = "The sinh-{G}ordon model beyond the self dual point and the freezing transition in disordered systems",
    JOURNAL = "J. High Energy Phys.",
    YEAR = "2022",
    NUMBER = "5",
    PAGES = "Paper No. 022, 23",
    DOI = "10.1007/jhep05(2022)022",
    URL = "https://doi.org/10.1007/jhep05(2022)022"
}

@incollection{MR4680395,
    AUTHOR = "Kozlowski, K.K.",
    TITLE = "Bootstrap approach to {$1+1$}-dimensional integrable quantum field theories: the case of the sinh-{G}ordon model",
    BOOKTITLE = "I{CM}---{I}nternational {C}ongress of {M}athematicians. {V}ol. 5. {S}ections 9--11",
    PAGES = "4096--4118",
    PUBLISHER = "EMS Press, Berlin",
    YEAR = "[2023] \copyright 2023",
    ISBN = "978-3-98547-063-1; 978-3-98547-563-6; 978-3-98547-058-7",
}

@article{MR4607722,
    AUTHOR = "Kozlowski, K.K.",
    TITLE = "On convergence of form factor expansions in the infinite volume quantum {S}inh-{G}ordon model in {$1+1$} dimensions",
    JOURNAL = "Invent. Math.",
    VOLUME = "233",
    YEAR = "2023",
    NUMBER = "2",
    PAGES = "725--827",
    DOI = "10.1007/s00222-023-01192-7",
    URL = "https://doi.org/10.1007/s00222-023-01192-7"
}

@Article{2408.16574,
    author = "Barashkov, N. and Oikarinen, J. and Wong, M.D.",
    title = "Small deviations of {G}aussian multiplicative chaos and the free energy of the two-dimensional massless {S}inh--{G}ordon model",
    year = "2024",
    eprint = "2408.16574",
    publisher = "arXiv",
    note = "Preprint, arXiv:2408.16574"
}

@article{MR5055747,
    AUTHOR = "Guillarmou, C. and Gunaratnam, T.S. and Vargas, V.",
    TITLE = "2d {S}inh-{G}ordon {M}odel on the {I}nfinite {C}ylinder",
    JOURNAL = "Commun. Math. Phys.",
    VOLUME = "407",
    YEAR = "2026",
    NUMBER = "5",
    PAGES = "Paper No. 97",
    DOI = "10.1007/s00220-026-05588-3",
    URL = "https://doi.org/10.1007/s00220-026-05588-3"
}

@Article{2408.16649,
    author = "Hofstetter, M. and Zeitouni, O.",
    title = "Decay of correlations for the massless hierarchical {L}iouville model in infinite volume",
    year = "2024",
    archiveprefix = "arXiv",
    eprint = "2408.16649",
    publisher = "arXiv",
    note = "Preprint, arXiv:2408.16649"
}

@Article{2609.08980,
    author = "Abdelghani, O. and Bauerschmidt, R. and Bodineau, T. and Dagallier, B.",
    title = "The {D}ing-{S}ong-{S}un inequality for a class of even ferromagnets",
    year = "2026",
    eprint = "2609.08980",
    publisher = "arXiv",
    note = "Preprint, arXiv:2609.08980"
}

@Article{2609.17461,
    author = "Abdelghani, O. and Bauerschmidt, R. and Hofstetter, M. and Zeitouni, O.",
    title = "Mass gap for the hierarchical sinh-{G}ordon model",
    year = "2026",
    archiveprefix = "arXiv",
    eprint = "2609.17461",
    note = "Preprint, arXiv:2609.17461"
}

@Article{MR449366,
    author = "Newman, C.M.",
    journal = "J. Stat. Phys.",
    title = "Rigorous results for general {I}sing ferromagnets",
    year = "1976",
    number = "5",
    pages = "399-406",
    volume = "15",
    doi = "10.1007/BF01020342",
}
\bibliographystyle{plain}

\end{document}